\documentclass[11pt]{article}

\usepackage[margin=1in]{geometry}
\usepackage{authblk}
\usepackage{amsmath,amssymb,amsthm}
\usepackage{mathtools}
\usepackage{bm}
\usepackage{algorithm}
\usepackage{algorithmic}
\usepackage{booktabs}
\usepackage{multirow}
\usepackage{array}
\usepackage{natbib}
\usepackage{url}
\usepackage{graphicx}
\usepackage{textcomp}
\usepackage{hyperref}

\newcommand{\R}{\mathbb{R}}
\newcommand{\E}{\mathbb{E}}
\newcommand{\Prob}{\mathbb{P}}
\newcommand{\cU}{\mathcal{U}}
\newcommand{\cX}{\mathcal{X}}
\newcommand{\cG}{\mathcal{G}}

\newcommand{\cB}{\mathcal{B}}
\newcommand{\cZ}{\mathcal{Z}}
\newcommand{\cD}{\mathcal{D}}
\newcommand{\cM}{\mathcal{M}}
\newcommand{\ip}[2]{\langle #1, #2 \rangle}
\newcommand{\norm}[1]{\|#1\|}

\newcommand{\btheta}{\bm{\theta}}
\newcommand{\bmu}{\bm{\mu}}
\newcommand{\bu}{\bm{u}}
\newcommand{\bw}{\bm{w}}
\newcommand{\bx}{\bm{x}}
\newcommand{\bxi}{\bm{\xi}}
\newcommand{\bL}{\bm{L}}
\newcommand{\bg}{\bm{g}}

\DeclareMathOperator{\argmin}{argmin}
\DeclareMathOperator{\argmax}{argmax}
\DeclareMathOperator{\ri}{ri}
\DeclareMathOperator{\tr}{tr}
\DeclareMathOperator{\diag}{diag}
\DeclareMathOperator{\chol}{chol}

\DeclareMathOperator{\conv}{conv}

\newcommand{\clsubd}{\partial^{\mathrm{C}}}
\newcommand{\ipF}[2]{\langle #1, #2 \rangle_F}

\newtheorem{theorem}{Theorem}
\newtheorem{lemma}[theorem]{Lemma}
\newtheorem{proposition}[theorem]{Proposition}
\newtheorem{corollary}[theorem]{Corollary}

\theoremstyle{definition}
\newtheorem{definition}{Definition}
\newtheorem{assumption}{Assumption}
\newtheorem{remark}{Remark}

\begin{document}

\title{End-to-End Learning of Correlated Operating Reserve Requirements in Security-Constrained Economic Dispatch}

\author[1]{Owen Shen\thanks{Corresponding author. Email: \href{mailto:owenshen@mit.edu}{owenshen@mit.edu}.}}
\author[2]{Hung-po Chao}
\author[1]{Haihao Lu}
\author[1]{Patrick Jaillet}

\affil[1]{Massachusetts Institute of Technology}
\affil[2]{Energy Trading Analytics, LLC}

\date{}

\maketitle

\begin{abstract}
Operating reserve requirements in security-constrained economic dispatch (SCED) depend strongly on the assumed correlation structure of renewable forecast errors, yet that structure is usually specified exogenously rather than learned for the dispatch task itself. This paper formulates correlated reserve-set design as an end-to-end trainable robust optimization problem: choose the ellipsoidal uncertainty-set shape to minimize robust dispatch cost subject to a target coverage requirement. By profiling the coverage constraint into a shape-dependent radius, the original bilevel problem becomes a single-stage differentiable objective, and KKT/dual information from the SCED solve provides task gradients without differentiating through the solver. For unknown distributions, a four-way train/tune/calibrate/test split combines a smoothed quantile-sensitivity estimator for training with split conformal calibration for deployment, yielding finite-sample marginal coverage under exchangeability and a consistent gradient estimator for the smoothed objective. The same task gradient can also be passed upstream to context-dependent encoders, which we report as a secondary extension. The framework is evaluated on the IEEE~118-bus system with a coupled SCED formulation that includes inter-zone transfer constraints. The learned static ellipsoid reduces dispatch cost by about 4.8\% relative to the Sample Covariance baseline while maintaining empirical coverage above the target level.
\end{abstract}

\paragraph{Keywords:} Robust optimization, uncertainty sets, operating reserves, machine learning, conformal prediction, electricity markets.

\section{Introduction}
\label{sec:introduction}

Electricity markets rely on operating reserve procurement and pricing mechanisms to manage uncertainty arising from high penetrations of wind and solar generation. Renewable forecast errors exhibit strong spatial and temporal correlations due to common weather patterns~\citep{hodge2011wind,pinson2013wind}. When such correlations are ignored, reserve capacity may be inefficiently allocated, leading to excessive costs and distorted price signals. A central modeling question is therefore how correlated reserve requirements should be parameterized for the dispatch task itself.

Robust optimization provides a principled framework for incorporating uncertainty into power system operations~\citep{ben2009robust,bertsimas2011theory}. In security-constrained economic dispatch (SCED), uncertain parameters are assumed to lie within a prescribed \emph{uncertainty set}, and the dispatch must remain feasible for all realizations within this set. The geometry of the uncertainty set fundamentally determines the conservatism and cost of the solution~\citep{bertsimas2013adaptive}.

The resulting design problem is inherently coupled: the uncertainty-set geometry affects reserve requirements, the SCED solve determines the economic value of that geometry, and the reliability requirement determines the radius needed for coverage. The goal in this paper is to learn that uncertainty-set geometry that is economically aligned with downstream dispatch. We therefore treat reserve-set design as an end-to-end trainable problem: profile the coverage requirement into a shape-dependent radius, optimize the resulting objective with KKT/dual information from the dispatch solve, and obtain a task gradient that can also be passed upstream to contextual models.

\subsection{Literature Review}

The most relevant prior work falls into six strands: robust power-system optimization, data-driven calibration, decision-focused learning, learned uncertainty sets, conformal prediction, and renewable forecast modeling.

\emph{A) Robust Optimization for Power Systems.}
Robust methods have been widely adopted for unit commitment and economic dispatch. Bertsimas et al.~\citep{bertsimas2013adaptive} developed adaptive robust optimization for security-constrained unit commitment with polyhedral uncertainty sets. Jiang et al.~\citep{jiang2012robust} proposed two-stage robust unit commitment, and Zeng and Zhao~\citep{zeng2013solving} introduced column-and-constraint generation. Lorca et al.~\citep{lorca2016multistage} extended these to multistage settings, while Jabr~\citep{jabr2013adjustable} developed adjustable robust OPF with ellipsoidal sets. These approaches assume a fixed uncertainty set geometry determined a priori.

\emph{B) Data-Driven and Distributionally Robust Approaches.}
Bertsimas et al.~\citep{bertsimas2018datadriven} proposed data-driven robust optimization using hypothesis testing to calibrate set size. Distributionally robust optimization (DRO) optimizes over ambiguity sets: Mohajerin Esfahani and Kuhn~\citep{esfahani2018data} developed Wasserstein DRO with tractable reformulations, Gao and Kleywegt~\citep{gao2023distributionally} established finite-sample guarantees, and Van Parys et al.~\citep{vanparys2021data} proved asymptotic optimality. In power systems, Xiong et al.~\citep{xiong2017distributionally} applied moment-based DRO to unit commitment. Roald et al.~\citep{roald2023power} survey optimization under uncertainty in power systems. These methods do not directly optimize uncertainty set geometry for downstream costs.

\emph{C) Decision-Focused Learning and Differentiable Optimization.}
Decision-focused learning trains models to minimize downstream decision cost~\citep{elmachtoub2022smart,donti2017task}. Differentiable optimization layers~\citep{amos2017optnet,agrawal2019differentiable,bolte2021nonsmooth} enable backpropagation through convex programs. Our envelope-based approach avoids differentiating through solvers entirely.

\emph{D) Learned Uncertainty Sets.}
Wang et al.~\citep{wang2023learning} proposed learning decision-focused uncertainty sets via implicit differentiation. Chenreddy and Delage~\citep{chenreddy2024end} developed end-to-end conditional robust optimization. Goerigk and Kurtz~\citep{goerigk2023data} used neural networks to predict uncertainty set parameters. What remains missing for reserve procurement is an application-driven formulation in which the uncertainty-set geometry, the SCED solve, and the coverage calibration are optimized as one pipeline. The framework in this paper closes that gap by profiling the coverage constraint and using KKT/dual sensitivities from SCED to optimize the resulting single-stage objective without solver backpropagation.

\emph{E) Conformal Prediction.}
Conformal prediction~\citep{vovk2005algorithmic} provides distribution-free uncertainty quantification under exchangeability. Romano et al.~\citep{romano2019conformalized} developed conformalized quantile regression, and Angelopoulos and Bates~\citep{angelopoulos2023conformal} provide a comprehensive tutorial. Johnstone and Cox~\citep{johnstone2021conformal} connected conformal prediction to robust optimization via Mahalanobis distance. Here, conformal calibration plays a specific operational role: once the shape is learned, it calibrates the radius needed to meet the reserve-coverage target.

\emph{F) Renewable Forecast Uncertainty.}
Wind and solar forecast errors exhibit complex spatial and temporal correlations~\citep{hodge2011wind,pinson2013wind}. Hong and Fan~\citep{hong2016probabilistic} surveyed probabilistic load forecasting. These works motivate the need for correlation-aware uncertainty sets that adapt to varying forecast error patterns---precisely what the learned Cholesky parameterization is designed to capture.

Taken together, these literatures motivate an end-to-end reserve-learning formulation in which uncertainty-set shape is learned for the dispatch task itself while coverage is enforced out of sample. That is the role of the framework developed below.

\subsection{Contribution and Organization}

\paragraph{Main contribution.}
\begin{itemize}
\item \textbf{Modeling and reformulation.} We formulate correlated reserve-set design in SCED as an end-to-end trainable robust optimization problem, where the uncertainty-set shape is optimized against the downstream robust dispatch value function and the coverage constraint is profiled into a shape-dependent radius. This yields a single-stage gradient-friendly objective, and the paper establishes the corresponding profiling and envelope-gradient results that justify using SCED dual information rather than solver differentiation.

\item \textbf{Algorithm and calibration.} We develop a practical learning procedure based on a train/ tune/ calibrate/ test split, where the tuning set supports estimation of the smoothed profiled gradient and the calibration set supplies a conformal radius for the final deployed set. The paper further gives a finite-sample conformal coverage guarantee under exchangeability and a consistency result for the smoothed gradient estimator under standard regularity conditions.

\item \textbf{Empirical study.} We validate the framework on the IEEE~118-bus system and show that the learned static ellipsoid reduces dispatch cost by 4.8\% relative to the Sample Covariance baseline while maintaining empirical coverage above the target level. We further study the method under coupled SCED with transfer constraints, showing that the same formulation and gradient machinery remain effective in a more operationally constrained setting.
\end{itemize}

\paragraph{Organization.}
Section~\ref{sec:formulation} fixes the uncertainty score, the robust dispatch value function, and the coverage-constrained learning problem. Section~\ref{sec:methodology} develops the reformulation in three steps: dual-based sensitivity of $V$, profiled gradients through the shape-dependent radius, and final conformal calibration. Section~\ref{sec:algorithm} turns these ingredients into a training pipeline and then reports a secondary contextual extension. Section~\ref{sec:application} presents the coupled SCED study, while the simpler decoupled zonal benchmark and the target-level sweep are deferred to Appendix~\ref{app:zonal-reserve}.

\section{End-to-End Reserve-Learning Problem}
\label{sec:formulation}

In electricity markets, operating reserve procurement must hedge against \emph{net load uncertainty}---the aggregate forecast error arising from renewable generation variability and load prediction errors. Let $U \in \R^d$ denote the uncertainty realization, where $d$ corresponds to the number of uncertainty sources (e.g., wind and solar regions, load zones). The application problem addressed here is to learn an uncertainty set for $U$ that is economical for SCED while still meeting a prescribed coverage level.

This section fixes three objects that will be used throughout the paper: a shape-dependent uncertainty score, the robust dispatch value function, and the coverage-constrained learning problem that will later be reduced to a shape-only objective. The central design choice is the \emph{uncertainty set} $\cU$ specifying which realizations must be hedged. This set must be large enough to cover most realizations (reliability) yet small enough to avoid excessive reserve costs (efficiency). We parameterize uncertainty sets via two components:
\begin{itemize}
\item The \textbf{shape} parameter $\btheta$ captures correlation structure---e.g., if wind forecast errors in neighboring regions are positively correlated, the uncertainty set elongates in that direction.
\item The \textbf{size} parameter $\rho$ controls overall conservatism---larger $\rho$ means more coverage but higher reserve costs.
\end{itemize}
The shape is the trainable object; the size will later be calibrated to meet the target coverage level. This separation is what allows the original coverage-constrained problem to be reduced to a single-stage differentiable objective.

\subsection{Score-Based Uncertainty Sets and Gauge Interpretation}

Fix an uncertainty dimension $d \geq 1$ and a parameter set $\Theta \subseteq \R^p$. We begin with a shape-dependent score $s_{\btheta}(\bu)$ that measures how far a realization $\bu$ lies from the center of the uncertainty set. For convex unit sets, this score is the \emph{gauge} (or Minkowski functional); in the ellipsoidal case used later, it reduces to a whitened Euclidean norm.

\begin{definition}[Parameterized Uncertainty Set]
For shape $\btheta \in \Theta$ and radius $\rho > 0$,
\begin{equation}\label{eq:uset}
\cU_{\btheta,\rho} := \{\bu \in \R^d : s_{\btheta}(\bu) \leq \rho\}
\end{equation}
where $s_{\btheta}: \R^d \to [0,\infty)$ is the gauge function of the unit set $\cU_{\btheta,1}$.
\end{definition}

The gauge function generalizes the notion of ``distance from the origin'' to non-spherical sets. For ellipsoidal uncertainty sets---the primary focus of this paper---$\btheta$ is the Cholesky factor $\bL$ of the covariance matrix $\Sigma = \bL\bL^\top$, and the gauge $s_{\bL}(\bu) = \|\bL^{-1}\bu\|_2$ measures how many ``standard deviations'' $\bu$ lie from the origin in the whitened coordinate system. When forecast errors in different zones are positively correlated, $\Sigma$ has off-diagonal structure, and the ellipsoid elongates along the correlated direction.

\begin{assumption}[Gauge Regularity]\label{asm:gauge}
For each $\btheta \in \Theta$, $\cU_{\btheta,1}$ is nonempty, convex, compact, and contains $\bm{0}$ in its interior. Consequently, $\cU_{\btheta,\rho} = \rho \cdot \cU_{\btheta,1}$ for all $\rho > 0$.
\end{assumption}

The support function converts uncertainty-set geometry into worst-case directional exposure, which is why it appears directly in the robust constraints below.

\begin{definition}[Support Function]
$\sigma_C(\bw) := \sup_{\bu \in C} \ip{\bw}{\bu}$.
\end{definition}

Operationally, $\sigma_C(\bw)$ is the worst-case effect of uncertainty in direction $\bw$. By homogeneity, $\sigma_{\cU_{\btheta,\rho}}(\bw) = \rho \cdot \sigma_{\cU_{\btheta,1}}(\bw)$ (Lemma~\ref{lem:support-scaling} in Appendix~\ref{app:supporting}).

\subsection{Robust Optimization Value Function}

Given a shape--radius pair $(\btheta,\rho)$, the downstream task is the robust dispatch cost. We capture that task abstractly by the value function $V(\btheta,\rho)$, which will be referenced in every subsequent learning formulation. Let $\cX \subseteq \R^{n_x}$ be a closed convex decision set, $f: \R^{n_x} \to \R \cup \{+\infty\}$ a convex objective, and $a_j$ convex constraint functions. Fix exposure vectors $\bw_1, \ldots, \bw_m \in \R^d$ and scalars $b_1, \ldots, b_m$.

\begin{equation}\label{eq:robust}
\begin{aligned}
V(\btheta,\rho) := \inf_{\bx \in \cX} \quad & f(\bx) \\
\text{s.t.} \quad & a_j(\bx) + \sigma_{\cU_{\btheta,\rho}}(\bw_j) \leq b_j, \quad j = 1,\ldots,m,
\end{aligned}
\end{equation}
with $V(\btheta,\rho)=+\infty$ if the feasible set is empty.

\begin{assumption}[Local Slater and Dual Attainment]\label{asm:slater}
Fix $(\btheta,\rho)$ with $V(\btheta,\rho)<+\infty$. Assume $f$ is bounded below on $\cX$ (i.e., $\inf_{\bx\in\cX}f(\bx)>-\infty$) and there exists $\bar{\bx}\in\ri(\cX)$ with $f(\bar{\bx})<+\infty$ satisfying all constraints strictly.
\end{assumption}

Under Assumption~\ref{asm:slater}, strong duality holds and the set of optimal dual multipliers $\cM^*(\btheta,\rho):=\argmax_{\bmu\ge 0}q(\bmu;\btheta,\rho)$ is nonempty (Lemma~\ref{lem:strong-duality}).

\subsection{The Bilevel Learning Problem}

Using the robust dispatch value function $V(\btheta,\rho)$ from~\eqref{eq:robust}, we consider the following coverage-constrained end-to-end learning problem. Let $U \sim P$ be a random uncertainty realization. Define the gauge score CDF $F_{\btheta}(r) := \Prob(s_{\btheta}(U) \leq r)$ and fix target coverage $\tau \in (0,1)$.

\begin{equation}\label{eq:bilevel}
\boxed{
\min_{\btheta \in \Theta,\, \rho > 0} V(\btheta,\rho) \quad \text{s.t.} \quad \Prob(U \in \cU_{\btheta,\rho}) \geq \tau
}
\end{equation}

Define the $\tau$-quantile radius $\rho_P(\btheta) := \inf\{r > 0 : F_{\btheta}(r) \geq \tau\}$.

\begin{proposition}[Radius Profiling]\label{prop:profiling}
Under Assumption~\ref{asm:gauge}, the bilevel problem~\eqref{eq:bilevel} reduces to
\begin{equation}\label{eq:profiled}
\min_{\btheta \in \Theta} J_P(\btheta) := V(\btheta, \rho_P(\btheta)).
\end{equation}
\end{proposition}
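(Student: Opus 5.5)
The plan is to prove the reduction in two steps. First, identify the feasible radii for each fixed shape. Second, show that $V(\btheta,\cdot)$ is nondecreasing in $\rho$, so the smallest feasible radius is optimal.

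Fix $\btheta\in\Theta$. By Assumption~\ref{asm:gauge} and the definition of $\cU_{\btheta,\rho}$ in \eqref{eq:uset}, $\{U\in\cU_{\btheta,\rho}\}=\{s_{\btheta}(U)\le\rho\}$, so $\Prob(U\in\cU_{\btheta,\rho})=F_{\btheta}(\rho)$. The coverage constraint in \eqref{eq:bilevel} is therefore exactly $F_{\btheta}(\rho)\ge\tau$.

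Since $F_{\btheta}$ is a CDF, it is nondecreasing and right-continuous. The set $R(\btheta):=\{r>0:F_{\btheta}(r)\ge\tau\}$ is then an interval of the form $[\rho_P(\btheta),\infty)$ whenever $\rho_P(\btheta)>0$. Right-continuity is what makes the infimum attained: $F_{\btheta}(\rho_P(\btheta))\ge\tau$. Existence of $R(\btheta)\neq\emptyset$ follows from $F_{\btheta}(r)\to 1>\tau$ as $r\to\infty$, because $s_{\btheta}$ is finite-valued.

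Second, I show monotonicity of $V$ in $\rho$. By homogeneity (Lemma~\ref{lem:support-scaling}), $\sigma_{\cU_{\btheta,\rho}}(\bw_j)=\rho\,\sigma_{\cU_{\btheta,1}}(\bw_j)$. Since $\bm 0\in\cU_{\btheta,1}$, we have $\sigma_{\cU_{\btheta,1}}(\bw_j)\ge 0$. Hence for $\rho\le\rho'$, each robust constraint at $\rho'$ implies the corresponding one at $\rho$. The feasible set of \eqref{eq:robust} shrinks as $\rho$ grows, so $V(\btheta,\rho)\le V(\btheta,\rho')$, with the convention $+\infty$ for infeasibility. Consequently $\inf_{\rho\in R(\btheta)}V(\btheta,\rho)=V(\btheta,\rho_P(\btheta))$, and the inner infimum is attained.

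Taking the infimum over $\btheta$ then gives
\[
\inf_{\btheta,\rho}\{V(\btheta,\rho):F_{\btheta}(\rho)\ge\tau\}=\inf_{\btheta}J_P(\btheta).
\]
The minimizer sets also correspond. If $\btheta^*$ minimizes $J_P$, then $(\btheta^*,\rho_P(\btheta^*))$ solves \eqref{eq:bilevel}. Conversely, any solution $(\btheta^*,\rho^*)$ of \eqref{eq:bilevel} has $\btheta^*$ minimizing $J_P$ and $V(\btheta^*,\rho^*)=J_P(\btheta^*)$.

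The main obstacle is the edge case in which the infimum defining $\rho_P(\btheta)$ equals $0$. This happens if $F_{\btheta}(0)=\Prob(U=\bm 0)\ge\tau$, in which case $\rho=0$ is excluded by the constraint $\rho>0$. I would handle it in one of two ways. One option is a mild assumption that $P$ has no atom at the origin of mass at least $\tau$. The other is to note that $V(\btheta,\rho)\downarrow V(\btheta,0)$ as $\rho\downarrow 0$, so the reduction holds in the infimum sense, with $\sigma_{\cU_{\btheta,0}}\equiv 0$.
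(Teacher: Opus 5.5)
Your proof is correct and follows the paper's argument: $V(\btheta,\cdot)$ is nondecreasing because the support functions grow with $\rho$, $F_{\btheta}(\rho)\ge\tau \iff \rho\ge\rho_P(\btheta)$, and so the smallest feasible radius is optimal. Your additions go beyond the paper's proof, namely the explicit use of right-continuity of $F_{\btheta}$, the correspondence of minimizers, and the $\rho_P(\btheta)=0$ edge case it passes over; note, though, that your second remedy for that case needs right-continuity of $V(\btheta,\cdot)$ at $0$, which follows from convexity of $\rho\mapsto V(\btheta,\rho)$ when $V(\btheta,\rho)<+\infty$ for some $\rho>0$ but can fail otherwise, so the no-atom assumption is the cleaner fix.
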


\begin{proof}
The map $\rho \mapsto V(\btheta, \rho)$ is nondecreasing (larger sets shrink the feasible region), so the smallest feasible radius is $\rho_P(\btheta)$. See Appendix~\ref{app:supporting}.
\end{proof}

The profiling result has a clear operational interpretation: \emph{given a shape $\btheta$, use the smallest radius $\rho_P(\btheta)$ that achieves $\tau$-coverage}. The shape determines \emph{where} to allocate reserves across zones---encoding which uncertainty directions to hedge---while the radius calibrates \emph{how conservatively} to hedge overall. The learning problem~\eqref{eq:profiled} optimizes the shape to minimize dispatch cost, while coverage is maintained by adjusting the radius to the learned shape. Proposition~\ref{prop:profiling} is the first reduction step: it removes the outer coverage constraint and leaves a shape-only optimization problem. The remaining challenge is to differentiate the dispatch value with respect to shape without backpropagating through the SCED solver.

\section{Single-Stage Differentiable Reformulation}
\label{sec:methodology}

This section turns the coverage-constrained formulation into the trainable objective used in Algorithms~\ref{alg:main}--\ref{alg:contextual}. The reformulation has three steps. First, differentiate the robust dispatch value function $V$ using SCED dual multipliers rather than solver backpropagation. Second, account for the fact that the coverage radius changes with the shape parameter. Third, after training, fix the deployed radius by split conformal calibration. Theorem~\ref{thm:consistency-eps} then justifies the smoothed profiled-gradient estimator used in practice.

\subsection{KKT/Envelope Sensitivities of \texorpdfstring{$V(\btheta,\rho)$}{V(theta, rho)}}

At a fixed $(\btheta,\rho)$, the robust SCED in~\eqref{eq:robust} is a convex program. This subsection shows that its optimal dual variables are sufficient to differentiate the value function $V(\btheta,\rho)$ with respect to both shape and radius. We formalize this sensitivity using the Clarke subdifferential $\clsubd$ (Definition~\ref{def:clarke} in Appendix~\ref{app:supporting}; see also~\citet{clarke1990optimization}).

\begin{assumption}[Support Function Smoothness]\label{asm:sigma-diff}
For each $j$ and $\rho>0$, the map $\btheta\mapsto \sigma_{\cU_{\btheta,\rho}}(\bw_j)$ is continuously differentiable on $\Theta$ with locally bounded gradient.
\end{assumption}

\begin{proposition}[KKT/Envelope Sensitivity of $V(\btheta,\rho)$]\label{thm:envelope}
Under Assumptions~\ref{asm:gauge}, \ref{asm:slater}, and~\ref{asm:sigma-diff}, fix $(\btheta,\rho)$ with $V(\btheta,\rho)<+\infty$ and let $\bmu^*\in\cM^*(\btheta,\rho)$.

\emph{(a) Shape gradient.}
\begin{equation}\label{eq:env-theta}
\bg_{\btheta}(\btheta,\rho;\bmu^*) := \sum_{j=1}^m \mu_j^* \nabla_{\btheta} \sigma_{\cU_{\btheta,\rho}}(\bw_j) \in \clsubd_{\btheta} V(\btheta,\rho).
\end{equation}
If the dual optimizer is unique, then $V(\cdot,\rho)$ is differentiable at $\btheta$ with gradient~\eqref{eq:env-theta}.

\emph{(b) Radius gradient.}
\begin{equation}\label{eq:env-rho}
g_{\rho}(\btheta,\rho;\bmu^*) := \sum_{j=1}^m \mu_j^*\,\sigma_{\cU_{\btheta,1}}(\bw_j) \in \clsubd_{\rho}V(\btheta,\rho).
\end{equation}
\end{proposition}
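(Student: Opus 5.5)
The plan is to reduce both parts to a single convex perturbation function composed with a smooth map, and then apply the Clarke chain rule. Write $\sigma_j(\btheta,\rho):=\sigma_{\cU_{\btheta,\rho}}(\bw_j)$ and $\bm{s}(\btheta,\rho):=(\sigma_1(\btheta,\rho),\ldots,\sigma_m(\btheta,\rho))$. Define the right-hand-side perturbation function
\[
h(\bm{s}) := \inf_{\bx\in\cX}\bigl\{ f(\bx) : a_j(\bx)+s_j\le b_j,\ j=1,\ldots,m \bigr\},
\]
so that $V(\btheta,\rho)=h(\bm{s}(\btheta,\rho))$. The key structural observation is that the uncertainty enters~\eqref{eq:robust} only through the $\bx$-independent additive terms $s_j$. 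The Lagrangian dual function therefore splits as
\[
q(\bmu;\btheta,\rho)=\phi(\bmu)+\ip{\bmu}{\bm{s}(\btheta,\rho)},
\qquad
\phi(\bmu):=\inf_{\bx\in\cX}\Bigl[f(\bx)+\textstyle\sum_j\mu_j\bigl(a_j(\bx)-b_j\bigr)\Bigr].
\]

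\textbf{Step 1: convexity and local Lipschitz continuity of $h$.} The function $h$ is convex, being the value function of a jointly convex program in $(\bx,\bm{s})$. At $\bm{s}^*:=\bm{s}(\btheta,\rho)$, the Slater point $\bar{\bx}$ of Assumption~\ref{asm:slater} satisfies all constraints with some slack $\delta>0$. Hence $h(\bm{s})\le f(\bar{\bx})<\infty$ for all $\bm{s}$ in a ball of radius $\delta$ around $\bm{s}^*$. Also $h>-\infty$ there, since $f$ is bounded below. So $h$ is finite on a neighborhood of $\bm{s}^*$, and therefore locally Lipschitz there.

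\textbf{Step 2: identify the subdifferential of $h$ with the dual optimal set.} I will show $\partial h(\bm{s}^*)=\cM^*(\btheta,\rho)$. This is the standard sensitivity theorem for convex programs, which I would prove directly in both directions.
\begin{itemize}
\item For $\bmu^*\in\cM^*$, strong duality (Lemma~\ref{lem:strong-duality}) gives $h(\bm{s}^*)=q(\bmu^*)$. Weak duality at an arbitrary $\bm{s}$ gives $h(\bm{s})\ge \phi(\bmu^*)+\ip{\bmu^*}{\bm{s}}$. Subtracting yields $h(\bm{s})\ge h(\bm{s}^*)+\ip{\bmu^*}{\bm{s}-\bm{s}^*}$, so $\bmu^*\in\partial h(\bm{s}^*)$.
\item Conversely, take $\bm{g}\in\partial h(\bm{s}^*)$. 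Since $h$ is nondecreasing in each coordinate, $\bm{g}\ge 0$. The subgradient inequality, taken with $\bm{s}$ tight for arbitrary $\bx\in\cX$, gives $q(\bm{g})\ge h(\bm{s}^*)$, so $\bm{g}$ is dual optimal.
\end{itemize}
I expect this step, together with the finiteness in Step 1, to be the main technical point. The rest is chain-rule bookkeeping.

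\textbf{Step 3: chain rule.} A finite convex function is Clarke regular, and its Clarke subdifferential equals its convex subdifferential. By Assumption~\ref{asm:sigma-diff}, $\bm{s}(\cdot,\rho)$ is $C^1$ and hence strictly differentiable. Clarke's chain rule for a regular outer function composed with a strictly differentiable inner map (Clarke, Thm.~2.3.10) then holds with equality:
\[
\clsubd_{\btheta}V(\btheta,\rho)=\Bigl\{\textstyle\sum_j\mu_j\nabla_{\btheta}\sigma_j(\btheta,\rho):\bmu\in\cM^*(\btheta,\rho)\Bigr\}.
\]
This contains~\eqref{eq:env-theta}.

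\textbf{Uniqueness case.} If $\cM^*$ is a singleton, then $\partial h(\bm{s}^*)$ is a singleton. A convex function with a singleton subdifferential at an interior point of its domain is differentiable there. The classical chain rule then gives differentiability of $V(\cdot,\rho)$ with gradient~\eqref{eq:env-theta}.

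\textbf{Part (b).} By Lemma~\ref{lem:support-scaling}, $\bm{s}(\btheta,\rho)=\rho\,\bm{s}(\btheta,1)$, which is linear and hence smooth in $\rho>0$, with derivative $\sigma_{\cU_{\btheta,1}}(\bw_j)$ in coordinate $j$. The same chain rule applied in $\rho$ yields $\sum_j\mu_j^*\sigma_{\cU_{\btheta,1}}(\bw_j)\in\clsubd_\rho V(\btheta,\rho)$. This needs no additional smoothness assumption.
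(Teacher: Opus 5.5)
Your proof is correct, but it takes a different route from the paper.

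The paper works directly with the dual representation $V(\btheta,\rho)=\max_{\bmu\ge 0}q(\bmu;\btheta,\rho)$. It uses the Slater slack to bound every dual optimizer uniformly on a neighborhood of $(\btheta,\rho)$. That bound gives local Lipschitzness and confines the maximization to a compact set. The paper then invokes the Danskin/Clarke max-envelope theorem to obtain $\clsubd_{\btheta}V=\conv\{\nabla_{\btheta}q(\bmu;\btheta,\rho):\bmu\in\cM^*\}$.

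You instead factor $V=h\circ\bm{s}$ through the convex right-hand-side perturbation function. You establish the classical identity $\partial h(\bm{s}^*)=\cM^*(\btheta,\rho)$ from weak and strong duality. You conclude with Clarke's chain rule for a regular (convex) outer function composed with a strictly differentiable inner map.

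What your route buys:
\begin{itemize}
\item It needs only finiteness of $h$ near the single point $\bm{s}^*$. This avoids both the uniform multiplier bound over a neighborhood and any envelope theorem over the noncompact cone $\{\bmu\ge 0\}$.
\item It delivers equality in the subdifferential formula, and Clarke regularity of $V(\cdot,\rho)$, as by-products.
\item The uniqueness case reduces to the standard fact that a convex function with a singleton subdifferential at an interior point of its domain is differentiable.
\item The identity $\partial h(\bm{s}^*)=\cM^*$ makes the shadow-price reading of $\mu_j^*$ (marginal cost per unit of the reserve term $\sigma_j$) literal.
\end{itemize}

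What the paper's route buys: it is the generic Danskin-type template. It needs only that each $q(\bmu;\cdot)$ be $C^1$ and that the optimal multipliers be bounded. Your factorization relies specifically on $(\btheta,\rho)$ entering~\eqref{eq:robust} only through the additive, $\bx$-independent terms $\sigma_j$. That structure holds here, and in exchange it gives a sharper conclusion with lighter machinery.
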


\begin{proof}
See Appendix~\ref{app:envelope-proof}. The Lagrange dual decomposes so that only support function terms depend on $(\btheta,\rho)$. Part~(b) uses $\sigma_{\cU_{\btheta,\rho}}(\bw_j)=\rho\,\sigma_{\cU_{\btheta,1}}(\bw_j)$.
\end{proof}

This is the key computational simplification of the paper: one SCED solve provides the KKT multipliers $\bmu^*$, and those multipliers directly produce the shape and radius sensitivities. No differentiation through the optimization solver is required. In power-systems terms, $\mu_j^*$ is the \emph{shadow price} of constraint $j$---the marginal cost increase per MW of additional reserve requirement---and~\eqref{eq:env-theta} steers $\btheta$ toward configurations that relax the most costly binding constraints.

\subsection{Profiled Gradient of the Single-Stage Objective}

The next step is to differentiate the profiled objective $J_P(\btheta)=V(\btheta,\rho_P(\btheta))$. Because the calibrated radius itself depends on the shape parameter, the full gradient contains both a direct envelope term and an indirect radius-adjustment term. Under sufficient regularity, Proposition~\ref{thm:true-oracle-grad} gives this exact profiled gradient.

\begin{assumption}[Quantile Regularity]\label{asm:quantile-reg}
The CDF $F_{\btheta}$ is continuously differentiable near $\rho_P(\btheta)$ with strictly positive density $f_{\btheta}(\rho_P(\btheta)) > 0$, and $(\btheta',r) \mapsto F_{\btheta'}(r)$ is $C^1$ near $(\btheta, \rho_P(\btheta))$.
\end{assumption}

Under Assumption~\ref{asm:quantile-reg}, the implicit function theorem yields $\nabla_{\btheta} \rho_P(\btheta) = -\nabla_{\btheta} F_{\btheta}(\rho_P(\btheta)) / f_{\btheta}(\rho_P(\btheta))$ (Lemma~\ref{lem:quantile-deriv}).

\begin{proposition}[Oracle Profiled Gradient]\label{thm:true-oracle-grad}
Under Assumptions~\ref{asm:gauge}--\ref{asm:quantile-reg}, assume $V$ is differentiable at $(\btheta,\rho_P(\btheta))$ (e.g., the dual optimizer is unique). Let $\bmu^*$ denote the optimizer at $(\btheta,\rho_P(\btheta))$. Then
\begin{equation}\label{eq:true-grad-expanded}
\nabla_{\btheta}J_P(\btheta)=\bg_{\btheta}(\btheta,\rho_P;\bmu^*)+g_\rho(\btheta,\rho_P;\bmu^*)\,\nabla_{\btheta}\rho_P(\btheta),
\end{equation}
where the first term is the direct shape effect~\eqref{eq:env-theta} and the second is the quantile-sensitivity correction combining~\eqref{eq:env-rho} with Lemma~\ref{lem:quantile-deriv}.
\end{proposition}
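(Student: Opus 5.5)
The plan is a chain rule applied to the composition $\btheta \mapsto (\btheta, \rho_P(\btheta)) \mapsto V(\btheta,\rho_P(\btheta))$. The only work is to identify the two partial derivatives of $V$ with the envelope expressions of Proposition~\ref{thm:envelope}.

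\textbf{Step 1: differentiability of the radius.} Under Assumption~\ref{asm:quantile-reg}, Lemma~\ref{lem:quantile-deriv} gives that $\rho_P$ is $C^1$ near $\btheta$, with $\nabla_{\btheta}\rho_P(\btheta) = -\nabla_{\btheta}F_{\btheta}(\rho_P(\btheta))/f_{\btheta}(\rho_P(\btheta))$. This follows from the implicit function theorem applied to $F_{\btheta'}(r)=\tau$, using $f_{\btheta}(\rho_P(\btheta))>0$. In particular, $\rho_P(\btheta)>0$ lies in the interior of the $\rho$-domain. So the map $\phi(\btheta') := (\btheta',\rho_P(\btheta'))$ is differentiable at $\btheta$, with Jacobian $\bigl[\,I;\ \nabla_{\btheta}\rho_P(\btheta)^\top\bigr]$.

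\textbf{Step 2: identify the gradient of $V$.} By hypothesis, $V$ is (Fréchet) differentiable at $(\btheta,\rho_P(\btheta))$. A locally Lipschitz function that is differentiable at a point need not have a singleton Clarke subdifferential there. So I will not argue through Clarke calculus. Instead I will use the one-sided directional characterization underlying Proposition~\ref{thm:envelope}.

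Concretely, the proof of Proposition~\ref{thm:envelope} (Appendix~\ref{app:envelope-proof}) shows, via the dual representation
$V(\btheta',\rho')=\sup_{\bmu\ge0} q(\bmu;\btheta',\rho')$, that $V$ dominates the dual function at the fixed multiplier:
\begin{equation*}
V(\btheta',\rho') \ \ge\ q(\bmu^*;\btheta',\rho'), \qquad q(\bmu^*;\btheta,\rho)=V(\btheta,\rho).
\end{equation*}
Here the only $(\btheta',\rho')$-dependence of $q(\bmu^*;\cdot)$ is the term $\sum_j \mu_j^*\,\rho'\,\sigma_{\cU_{\btheta',1}}(\bw_j)$. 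By Assumption~\ref{asm:sigma-diff} and Lemma~\ref{lem:support-scaling}, this term is differentiable, with gradient $(\bg_{\btheta},g_\rho)$ at $(\btheta,\rho)$.

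The difference $V - q(\bmu^*;\cdot)$ is therefore nonnegative, vanishes at $(\btheta,\rho_P(\btheta))$, and is differentiable there. Hence its gradient is zero at that point, which gives
\begin{equation*}
\nabla V(\btheta,\rho_P(\btheta)) = (\bg_{\btheta}(\btheta,\rho_P;\bmu^*),\ g_\rho(\btheta,\rho_P;\bmu^*)).
\end{equation*}
This works for any dual optimizer $\bmu^*$, not only a unique one, so differentiability of $V$ alone suffices.

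\textbf{Step 3: chain rule.} Since $V$ is Fréchet differentiable at $\phi(\btheta)$ and $\phi$ is differentiable at $\btheta$, the ordinary chain rule gives
$\nabla J_P(\btheta) = \nabla_{\btheta}V + \partial_\rho V\cdot\nabla_{\btheta}\rho_P(\btheta)$.
Substituting Step 2 yields \eqref{eq:true-grad-expanded}. Substituting Lemma~\ref{lem:quantile-deriv} displays the quantile-sensitivity correction explicitly.

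\textbf{Main obstacle.} The delicate point is Step 2, namely the minorant argument. It needs $q(\bmu^*;\cdot)$ to be finite and differentiable in a neighborhood of the point, not just at it. This holds because the $\bx$-part of the Lagrangian infimum does not depend on $(\btheta,\rho)$, and Assumption~\ref{asm:slater} makes that part finite. A secondary check is that Assumption~\ref{asm:sigma-diff} is stated for each fixed $\rho$. Joint differentiability in $(\btheta,\rho)$ then follows from the product form $\rho\,\sigma_{\cU_{\btheta,1}}(\bw_j)$ together with continuity of $\nabla_{\btheta}\sigma$.
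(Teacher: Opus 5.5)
Your proof is correct. Its outer structure matches the paper's: differentiate $\rho_P$ via Lemma~\ref{lem:quantile-deriv}, then apply the ordinary chain rule to $V(\btheta,\rho_P(\btheta))$. The difference is in how you identify the partial derivatives of $V$ with $(\bg_{\btheta},g_\rho)$.

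The paper imports them from Proposition~\ref{thm:envelope} in the unique-dual case. That result rests on the uniform multiplier bound and the Danskin/Clarke max-envelope theorem. As stated, part (a) gives only differentiability of $V(\cdot,\rho)$ in $\btheta$, and part (b) gives only a Clarke subgradient in $\rho$, so the joint differentiability is really supplied by the hypothesis.

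You instead argue directly from weak duality. $q(\bmu^*;\cdot)$ is a minorant of $V$ that touches it at $(\btheta,\rho_P(\btheta))$. Its only $(\btheta',\rho')$-dependence is the $C^1$ term $\sum_j\mu_j^*\rho'\sigma_{\cU_{\btheta',1}}(\bw_j)$, so the two gradients must agree. This buys several things:
\begin{itemize}
\item It needs no Danskin theorem, no multiplier bound, and no uniqueness.
\item It gives the full joint gradient in one step.
\item It shows that every dual optimizer yields the same gradient. That fits the statement's actual hypothesis (differentiability of $V$, with uniqueness only as an example) better than the paper's appeal to the unique-dual case.
\end{itemize}

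Your caution about Clarke calculus is right in general, but it is not decisive here. Locally, $V$ is a maximum of $C^1$ functions over a compact multiplier set, so it is Clarke regular. Differentiability then forces $\clsubd V$ to be a singleton, and the paper's route could also be completed for non-unique duals. Your touching argument simply avoids invoking that fact.

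In exchange, the paper's route sits inside a result that also delivers Clarke subgradients at nondifferentiable points. That is what justifies using an arbitrary solver-returned $\bmu^*$ when the duals are degenerate.
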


\begin{proof}
Chain rule on $J_P(\btheta)=V(\btheta,\rho_P(\btheta))$ with envelope derivatives from Proposition~\ref{thm:envelope} (unique-dual case).
\end{proof}

The direct shape effect captures how changing $\btheta$ affects reserve requirements at fixed radius; the quantile-sensitivity correction accounts for the induced shift in $\rho_P(\btheta)$ as reshaping changes which samples lie inside the set. This shape--size coupling is why the full profiled gradient is needed; when $P$ is unknown, the correction must be approximated from data.

\subsection{Conformal Calibration of the Radius}

Once a shape has been trained, the remaining task is to calibrate the radius so that the learned uncertainty set attains the target coverage level. Split conformal prediction provides exactly this calibration using an independent sample.

\begin{assumption}[Calibration Exchangeability]\label{asm:exchange}
Conditional on $\hat{\btheta}$, the calibration sample $(U_1,\ldots,U_{n_{\mathrm{cal}}})$ and the future realization $U_{\mathrm{new}}$ are exchangeable.
\end{assumption}

Given calibration scores $S_i = s_{\hat{\btheta}}(U_i)$, define the split-conformal radius
\begin{equation}\label{eq:conformal-rho}
\hat{\rho}_\tau := \inf\left\{ r>0 : \frac{1+\sum_{i=1}^{n_{\text{cal}}}\mathbf{1}\{S_i\le r\}}{n_{\text{cal}}+1} \ge \tau \right\}.
\end{equation}

\begin{proposition}[Conformal Calibration Guarantee]\label{thm:conformal}
Under Assumption~\ref{asm:exchange}, fix any $\hat{\btheta}$ independent of $(U_i)_{i=1}^{n_{\text{cal}}}$. Then
\[
\Prob(U_{\text{new}} \in \cU_{\hat{\btheta}, \hat{\rho}_\tau}) \geq \tau.
\]
\end{proposition}

\begin{proof}
See Appendix~\ref{app:statistical-proofs}. The radius in~\eqref{eq:conformal-rho} is equivalent to the usual split-conformal order-statistic threshold; the proof uses the standard exchangeability rank argument with randomized tie-breaking.
\end{proof}

Operationally, training determines the shape, while calibration fixes the final deployed radius. In power-systems terms, $\tau = 0.95$ means the realized uncertainty falls within the procured set for at least 95\% of future operating hours---a sufficient condition for reserve adequacy under~\eqref{eq:robust}, though not accounting for other reliability drivers (ramping, contingencies, model mismatch). The guarantee relies on a four-way data split: $\cD_{\text{train}}$ for shape optimization, $\cD_{\text{tune}}$ for training-time quantile-sensitivity estimation, $\cD_{\text{cal}}$ for deployment-time radius calibration, and $\cD_{\text{test}}$ for evaluation. Independence between $\cD_{\text{tune}}$ and $\cD_{\text{cal}}$ ensures that calibration remains valid after tuning; for dependent time-series data, block-based calibration~\citep{barber2023conformal} is needed for rigorous finite-sample validity.

\subsection{Tuning-Based Gradient Estimation and Main Consistency Result}

Training cannot access the population radius $\rho_P(\btheta)$ directly, so we replace it with a smoothed population quantile and its tuning-set estimate. Let $\Phi$ be a smooth CDF kernel with derivative $\varphi := \Phi'$ (Assumption~\ref{asm:smoothing}). For bandwidth $\varepsilon > 0$, define
\begin{align}\label{eq:smoothed-cdf}
F_{\btheta,\varepsilon}(r) &:= \E\left[\Phi\left(\frac{r - s_{\btheta}(U)}{\varepsilon}\right)\right], \notag\\
\rho_{P,\varepsilon}(\btheta) &:= \inf\{r : F_{\btheta,\varepsilon}(r) \geq \tau\}.
\end{align}

\paragraph{Radii used in the paper.}
The notation separates four roles. The population coverage radius $\rho_P(\btheta)$ appears in the original profiled objective~\eqref{eq:profiled}. The smoothed population radius $\rho_{P,\varepsilon}(\btheta)$ is its training-time population analogue. The empirical smoothed radius $\hat{\rho}_\varepsilon(\btheta)$ is the tuning-set estimate used inside the gradient updates. The split-conformal radius $\hat{\rho}_\tau$ is the final deployed radius used for calibration and test evaluation.

Given tuning data $\{U_i\}_{i=1}^{n_{\text{tune}}}$, define the empirical smoothed CDF and empirical smoothed quantile by
\begin{align}
\hat F_{\btheta,\varepsilon}(r) &:= \frac{1}{n_{\text{tune}}}\sum_{i=1}^{n_{\text{tune}}}\Phi\left(\frac{r - s_{\btheta}(U_i)}{\varepsilon}\right), \notag\\
\hat{\rho}_\varepsilon(\btheta) &:= \inf\{r : \hat F_{\btheta,\varepsilon}(r) \geq \tau\}. \label{eq:emp-smoothed-quantile}
\end{align}
With scores $S_i(\btheta) = s_{\btheta}(U_i)$ and weights $\omega_i(\btheta) := \varphi((\hat{\rho}_\varepsilon(\btheta) - S_i(\btheta))/\varepsilon)$, the empirical quantile sensitivity is
\begin{equation}\label{eq:rhohat-grad}
\widehat{\nabla_{\btheta} \rho_\varepsilon}(\btheta) := \frac{\sum_{i=1}^{n_{\text{tune}}} \omega_i(\btheta) \nabla_{\btheta} s_{\btheta}(U_i)}{\sum_{i=1}^{n_{\text{tune}}} \omega_i(\btheta)}.
\end{equation}

The approximate profiled gradient combines the direct envelope effect with the induced change in the training-time radius:
\begin{equation}\label{eq:ghat}
\boxed{
\begin{aligned}
\hat{\bg}_\varepsilon(\btheta) &:= \underbrace{\sum_{j=1}^m \mu_j^* \nabla_{\btheta} \sigma_{\cU_{\btheta,\hat{\rho}_\varepsilon(\btheta)}}(\bw_j)}_{\text{envelope shape term}} \\
&\quad + \underbrace{\left(\sum_{j=1}^m \mu_j^* \sigma_{\cU_{\btheta,1}}(\bw_j)\right)}_{\text{size sensitivity}} \cdot \underbrace{\widehat{\nabla_{\btheta} \rho_\varepsilon}(\btheta)}_{\text{quantile sensitivity}}
\end{aligned}
}
\end{equation}
where $\bmu^*$ denotes the dual multipliers of the robust solve at $(\btheta,\hat{\rho}_\varepsilon(\btheta))$.

Define the smoothed profiled objective $J_{P,\varepsilon}(\btheta):=V(\btheta,\rho_{P,\varepsilon}(\btheta))$. The theorem below is the main statistical statement: the tuned gradient used in Algorithm~\ref{alg:main} converges to the gradient of this smoothed profiled objective.

\begin{theorem}[Consistency]\label{thm:consistency-eps}
Fix $\varepsilon>0$ and $\btheta\in\Theta$. Under standard smoothness, strict positivity, and continuity conditions (stated as (A1)--(A5) in Appendix~\ref{app:statistical-proofs}),
\[
\hat{\bg}_\varepsilon(\btheta)\ \xrightarrow{\ \Prob\ }\ \nabla_{\btheta}J_{P,\varepsilon}(\btheta) \quad \text{as } n_{\mathrm{tune}}\to\infty.
\]
\end{theorem}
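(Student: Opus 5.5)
The plan is to compute the target gradient in closed form and then show that each factor of $\hat{\bg}_\varepsilon(\btheta)$ converges in probability to the corresponding population factor, with the continuous mapping theorem closing the argument. First, the population side. Since $\Phi$ is a smooth CDF, $F_{\btheta,\varepsilon}(r)$ is $C^1$ in $(\btheta,r)$ whenever $\nabla_{\btheta}s_{\btheta}(U)$ has a dominating integrable envelope; I take this to be part of (A1)--(A5). Differentiation under the expectation then gives $\partial_r F_{\btheta,\varepsilon}(r)=\varepsilon^{-1}\E[\varphi((r-s_{\btheta}(U))/\varepsilon)]$, which is strictly positive at $\rho_{P,\varepsilon}(\btheta)$ by the positivity assumption. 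The implicit function theorem, applied as in Lemma~\ref{lem:quantile-deriv}, yields
\[
\nabla_{\btheta}\rho_{P,\varepsilon}(\btheta)=\frac{\E[\varphi((\rho_{P,\varepsilon}-s_{\btheta}(U))/\varepsilon)\nabla_{\btheta}s_{\btheta}(U)]}{\E[\varphi((\rho_{P,\varepsilon}-s_{\btheta}(U))/\varepsilon)]}.
\]
The $\varepsilon^{-1}$ factors cancel, and this is exactly the population counterpart of~\eqref{eq:rhohat-grad}. Next, assuming $V$ is differentiable at $(\btheta,\rho_{P,\varepsilon}(\btheta))$ with a unique dual optimizer, I repeat the argument of Proposition~\ref{thm:true-oracle-grad} with $\rho_P$ replaced by $\rho_{P,\varepsilon}$. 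This gives
\[
\nabla J_{P,\varepsilon}(\btheta)=\bg_{\btheta}(\btheta,\rho_{P,\varepsilon};\bmu^*)+g_\rho(\btheta,\rho_{P,\varepsilon};\bmu^*)\nabla_{\btheta}\rho_{P,\varepsilon}(\btheta).
\]

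Second, I show that the radius is consistent, $\hat\rho_\varepsilon(\btheta)\to\rho_{P,\varepsilon}(\btheta)$ in probability. For fixed $r$, the law of large numbers gives $\hat F_{\btheta,\varepsilon}(r)\to F_{\btheta,\varepsilon}(r)$, since $\Phi$ is bounded. Because $F_{\btheta,\varepsilon}$ is continuous, nondecreasing, and strictly increasing at the quantile (positive derivative), the standard quantile-consistency argument applies. Take $r_\pm=\rho_{P,\varepsilon}\pm\delta$, which satisfy $F(r_-)<\tau<F(r_+)$. With probability tending to one, $\hat F(r_-)<\tau\le\hat F(r_+)$, so $\hat\rho_\varepsilon\in[r_-,r_+]$.

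Third, I establish consistency of the weighted ratio~\eqref{eq:rhohat-grad}, which I expect to be the main obstacle. Here the weights are evaluated at the random point $\hat\rho_\varepsilon$ rather than at a fixed $r$. I would handle this with a uniform law of large numbers over the compact neighborhood $r\in[\rho_{P,\varepsilon}-\delta,\rho_{P,\varepsilon}+\delta]$. The class $r\mapsto\varphi((r-s)/\varepsilon)\nabla s$ is Lipschitz in $r$ with constant $\|\varphi'\|_\infty\varepsilon^{-1}|\nabla s|$, which is integrable by (A1)--(A5). A bracketing (or Lipschitz-in-parameter) Glivenko--Cantelli result therefore gives uniform convergence of both the numerator and the denominator averages. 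Combining this with the consistency of $\hat\rho_\varepsilon$ and the continuity of the limits in $r$ shows that the numerator and denominator converge to their population values. The denominator limit is positive, so the ratio converges by the continuous mapping theorem.

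Finally, I handle the dual-dependent terms. The maps $\rho\mapsto\nabla_{\btheta}\sigma_{\cU_{\btheta,\rho}}(\bw_j)=\rho\nabla_{\btheta}\sigma_{\cU_{\btheta,1}}(\bw_j)$ and $\sigma_{\cU_{\btheta,1}}(\bw_j)$ are continuous, the latter being constant in $\rho$. What remains is to show $\bmu^*(\btheta,\hat\rho_\varepsilon)\to\bmu^*(\btheta,\rho_{P,\varepsilon})$. Uniqueness of the dual at the limit point, together with Slater's condition (Assumption~\ref{asm:slater}), which gives local boundedness of the dual solution sets, implies that the dual solution map is outer semicontinuous by Berge's maximum theorem applied to the concave dual $q(\cdot;\btheta,\rho)$. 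Outer semicontinuity plus uniqueness gives continuity at $\rho_{P,\varepsilon}$, and hence convergence in probability by continuous mapping. Substituting all the limits into~\eqref{eq:ghat} then recovers the expression for $\nabla J_{P,\varepsilon}(\btheta)$ from the first step.
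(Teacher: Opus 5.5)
Your argument is correct and has the same skeleton as the paper's proof. Both identify $\nabla_{\btheta}J_{P,\varepsilon}$ via the implicit function theorem and the chain rule, prove $\hat\rho_\varepsilon\to\rho_{P,\varepsilon}$, prove the kernel-weighted ratio converges at the random radius, handle the dual-dependent factors, and combine by continuous mapping. Three steps use different tools.

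\textbf{Radius.} You apply the pointwise LLN at the two points $\rho_{P,\varepsilon}\pm\delta$ and use monotonicity of $\hat F_{\btheta,\varepsilon}$. This is leaner than the paper's grid-based uniform convergence of $\hat F$ on a neighborhood; uniformity is not needed for this step.

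\textbf{Ratio.} You invoke a uniform LLN over a compact $r$-neighborhood. The Lipschitz constant you quote, $\|\varphi'\|_\infty/\varepsilon$, assumes $\varphi$ is differentiable with bounded derivative. (A2) gives only uniform continuity, so you should instead use the Glivenko--Cantelli theorem for classes that are merely continuous in the parameter and have an integrable envelope, here $\|\varphi\|_\infty M(U)$. The paper's route is lighter. Uniform continuity of $\varphi$ gives the deterministic bound $\|N_n(\hat\rho)-N_n(\rho)\|\le\Omega(|\hat\rho-\rho|/\varepsilon)\,n^{-1}\sum_i M(U_i)$, where $\Omega$ is the modulus of continuity. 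This reduces everything to the fixed-$r$ LLN, with no empirical-process machinery.

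\textbf{Multipliers.} The paper simply assumes in (A4) a unique dual optimizer that is continuous nearby. You derive that continuity from uniqueness at the limit point plus the Slater-based uniform bound on multipliers, which is a genuine weakening of the hypothesis. The classical Berge theorem needs a continuous objective, and $\tilde q$ is only upper semicontinuous and possibly $-\infty$. The right justification is the direct argument: by upper semicontinuity of $\tilde q$ and continuity of the linear term in $\rho$, any limit point of $\bmu^*(\btheta,\hat\rho_\varepsilon)$ maximizes $q(\cdot;\btheta,\rho_{P,\varepsilon})$, so uniqueness forces convergence.
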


\begin{proof}
See Appendix~\ref{app:statistical-proofs}.
\end{proof}

In other words, the gradient used in the static training loop is asymptotically correct for the smoothed profiled objective, rather than for an unrelated surrogate.

\begin{remark}
Theorem~\ref{thm:consistency-eps} is proved for i.i.d.\ tuning samples. The vector autoregressive [VAR(1)] data in Section~\ref{sec:application} therefore should be read as an application-driven stress test under temporal dependence, rather than as a direct verification of the theorem's assumptions.
\end{remark}

\subsection{Ellipsoidal Specialization}

We now instantiate the generic score and support functions for the ellipsoidal family used in the experiments. Let $\btheta = \bL \in \R^{d \times d}$ be a lower-triangular Cholesky factor with positive diagonal. The ellipsoidal gauge and support functions are:
\begin{equation}
s_{\bL}(\bu) = \norm{\bL^{-1} \bu}_2, \quad
\sigma_{\cU_{\bL,\rho}}(\bw) = \rho \norm{\bL^\top \bw}_2.
\end{equation}
The matrix gradients are (Proposition~\ref{prop:ellipsoid-grads} in Appendix~\ref{app:ellipsoidal}):
\begin{align}\label{eq:grad-sigma-L}
\nabla_{\bL}\,\sigma_{\cU_{\bL,\rho}}(\bw) &= \rho\,\frac{\bw\bw^\top\bL}{\|\bL^\top\bw\|_2}, \\
\nabla_{\bL}\,s_{\bL}(\bu) &= -\,\frac{\bL^{-\top}(\bL^{-1}\bu)(\bL^{-1}\bu)^\top}{\|\bL^{-1}\bu\|_2}. \notag
\end{align}
For numerical stability, a trace normalization constraint $\tr(\bL\bL^\top) = d$ is imposed via projection after each gradient step.

\section{Training Procedure}
\label{sec:algorithm}

This section turns the reformulation into an implementable pipeline. Algorithm~\ref{alg:main} is the main static method; Algorithm~\ref{alg:contextual} is a secondary context-dependent extension that uses the same task gradient for upstream learning.

\subsection{Static Shape Training}

Algorithm~\ref{alg:main} is the main implementation of the single-stage reformulation for a fixed (non-contextual) shape parameter $\btheta$. Each iteration has three roles: estimate the shape-dependent radius sensitivity on tuning data, solve one robust SCED to extract KKT multipliers, and take one projected gradient step on the shape parameter.

\begin{algorithm}[!t]
\caption{Training the Single-Stage Differentiable Reserve-Learning Problem}
\label{alg:main}
\begin{algorithmic}[1]
\REQUIRE $\cD_{\text{train}}$, $\cD_{\text{tune}}$, $\cD_{\text{cal}}$, coverage $\tau$, bandwidth $\varepsilon$, iterations $K$, step sizes $\{\eta_k\}$
\STATE Initialize $\btheta_0 \in \Theta$ \COMMENT{e.g., from sample covariance of $\cD_{\text{train}}$}

\FOR{$k = 0, 1, \ldots, K-1$}
    \STATE \textit{// Phase A: Tuning}
    \STATE Compute gauge scores $S_i(\btheta_k) = s_{\btheta_k}(U_i)$ for $U_i \in \cD_{\text{tune}}$
    \STATE Compute smoothed $\tau$-quantile $\hat{\rho}_\varepsilon(\btheta_k)$ and weights $\omega_i$
    \STATE Estimate quantile gradient $\widehat{\nabla_{\btheta}\rho_\varepsilon}(\btheta_k)$ via~\eqref{eq:rhohat-grad}
    \STATE \textit{// Phase B: Robust solve}
    \STATE Solve SCED robust dispatch~\eqref{eq:robust} at $(\btheta_k, \hat{\rho}_\varepsilon(\btheta_k))$; extract $\bmu^*$
    \STATE \textit{// Phase C: Gradient update}
    \STATE Compute $\hat{\bg}_\varepsilon(\btheta_k)$ via~\eqref{eq:ghat}
    \STATE $\btheta_{k+1} \leftarrow \Pi_\Theta(\btheta_k - \eta_k \cdot \hat{\bg}_\varepsilon(\btheta_k))$
\ENDFOR

\STATE Compute calibration scores $S_i = s_{\btheta_K}(U_i)$ for $U_i \in \cD_{\text{cal}}$
\STATE Set $\hat{\rho}_\tau$ to the split-conformal radius defined in Proposition~\ref{thm:conformal}

\ENSURE Learned $\hat{\btheta} = \btheta_K$, calibrated $\hat{\rho}_\tau$ with coverage $\geq \tau$ (Proposition~\ref{thm:conformal})
\end{algorithmic}
\end{algorithm}

\textbf{Phase A (Tuning)} estimates the current smoothed quantile $\hat{\rho}_\varepsilon$ and its sensitivity from $\cD_{\text{tune}}$ using the smoothed CDF~\eqref{eq:smoothed-cdf}. \textbf{Phase B} solves the robust dispatch~\eqref{eq:robust} at the current $(\btheta_k, \hat{\rho}_\varepsilon)$ and extracts the KKT multipliers $\bmu^*$. \textbf{Phase C} combines the envelope and quantile-sensitivity terms into the profiled gradient~\eqref{eq:ghat} and takes a projected gradient step. The projection $\Pi_\Theta(\cdot):=\argmin_{\btheta\in\Theta}\|\cdot-\btheta\|$ (Frobenius norm for matrix parameters) enforces the parameter constraints---for ellipsoidal sets, this means lower-triangular structure, positive diagonal entries, and trace normalization $\tr(\bL\bL^\top) = d$.

Two computational regimes determine the cost of Phase~B. When constraints decouple across zones, reserve requirements $R_z^{\min} = \rho\|\bL^\top A_z\|_2$ are explicit functions of $(\bL,\rho)$, and all gradients of the support function are available in closed form via~\eqref{eq:grad-sigma-L}. The SCED must still be solved to obtain dual multipliers $\bmu^*$, but no differentiation through the solver is required---$\bmu^*$ is a byproduct of any LP solver (Appendix~\ref{app:zonal-reserve}). When network or transfer constraints bind, one SCED solve per iteration provides the dual multipliers required by Proposition~\ref{thm:envelope}.

After training, conformal calibration (lines 14--15) computes gauge scores on the held-out $\cD_{\text{cal}}$ and then applies the standard split-conformal radius from Proposition~\ref{thm:conformal}, guaranteeing coverage $\geq \tau$. The tuning radius $\hat{\rho}_\varepsilon$ is therefore only a training-time surrogate; deployment and reported evaluation always use the final conformal radius $\hat{\rho}_\tau$.

\subsection{Secondary Context-Dependent Extension and Upstream Gradient Passage}

The static formulation above is the main object of study. For completeness, we also consider a context-dependent extension in which a differentiable encoder $\bL_\phi: \Xi \to \Theta$ maps context features $\bxi \in \Xi$ to shape parameters. This extension illustrates how the same task gradient can be passed upstream to a learned representation of operating conditions. The framework accommodates any differentiable encoder architecture; the only requirement is that $\bL_\phi(\bxi)$ produce a valid Cholesky factor (lower-triangular, positive diagonal) for each $\bxi$.

The learning objective in the contextual setting minimizes expected dispatch cost over operating conditions:
\begin{equation}\label{eq:contextual-objective}
\min_{\phi}\ \E_{\bxi}\left[V(\bL_\phi(\bxi),\, \rho(\phi))\right],
\end{equation}
where $\rho(\phi)$ is the smoothed $\tau$-quantile of the mixture score distribution $\{s_{\bL_\phi(\bxi_t)}(U_t)\}_{t}$. In practice, the expectation is approximated by mini-batches from $\cD_{\text{train}}$ (Algorithm~\ref{alg:contextual}, line~3), while $\rho(\phi)$ and its sensitivity are estimated from $\cD_{\text{tune}}$. Note that $V(\cdot,\cdot)$ denotes the same robust dispatch~\eqref{eq:robust} for all $t$---the system parameters (loads, generator costs, network) are fixed, and context enters only through the uncertainty set shape $\bL_\phi(\bxi_t)$. Conformal calibration provides \emph{marginal} coverage: $\Prob(U_{\mathrm{new}} \in \cU_{\bL_\phi(\bxi_{\mathrm{new}}), \hat{\rho}_\tau}) \geq \tau$, averaging over both the future context and uncertainty realization.

Algorithm~\ref{alg:contextual} extends Algorithm~\ref{alg:main} to the contextual setting. The key difference is that each training sample $(\bxi_i, \bu_i)$ produces its own shape $\bL_i = \bL_\phi(\bxi_i)$ and its own SCED solve, yielding context-specific dual multipliers. The profiled gradient $\hat{\bg}_i$ computed at each $\bL_i$ serves as a ``task gradient'' that is backpropagated through the encoder to update $\phi$.

\begin{algorithm}[!t]
\caption{Contextual Profiled Gradient Training}
\label{alg:contextual}
\begin{algorithmic}[1]
\REQUIRE Encoder $\bL_\phi$, $\cD_{\text{train}}$, $\cD_{\text{tune}}$, $\cD_{\text{cal}}$, coverage $\tau$, bandwidth $\varepsilon$, iterations $K$, step sizes $\{\eta_k\}$, batch size $B$
\STATE Initialize encoder parameters $\phi_0$

\FOR{$k = 0, 1, \ldots, K-1$}
    \STATE Sample mini-batch $\{(\bxi_i, \bu_i)\}_{i=1}^B$ from $\cD_{\text{train}}$
    \STATE Compute per-sample shapes: $\bL_i = \bL_{\phi_k}(\bxi_i)$ for $i = 1,\ldots,B$
    \STATE \textit{// Phase A: Tuning (on $\cD_{\text{tune}}$)}
    \STATE Compute gauge scores using current encoder on tuning set
    \STATE Estimate smoothed quantile $\hat{\rho}_\varepsilon$ and quantile sensitivity
    \STATE \textit{// Phase B: Per-sample robust solves}
    \FOR{$i = 1, \ldots, B$}
        \STATE Solve SCED at $(\bL_i, \hat{\rho}_\varepsilon)$; extract $\bmu_i^*$
        \STATE Compute approximate task gradient $\hat{\bg}_i$ via~\eqref{eq:ghat}
    \ENDFOR
    \STATE \textit{// Phase C: Backpropagate through encoder}
    \STATE Set $\partial \mathcal{J} / \partial \bL_i \leftarrow \hat{\bg}_i$ for each sample
    \STATE Update $\phi_{k+1} \leftarrow \phi_k - \eta_k \nabla_\phi \left(\frac{1}{B}\sum_{i=1}^B \ipF{\hat{\bg}_i}{\bL_i} \right)$
\ENDFOR

\STATE Conformal calibration: $\hat{\rho}_\tau = S_{(\lceil(n_{\text{cal}}+1)\tau\rceil)}$ where $S_i = s_{\bL_\phi(\bxi_i)}(\bu_i)$ on $\cD_{\text{cal}}$

\ENSURE Learned encoder $\bL_{\phi_K}$, calibrated $\hat{\rho}_\tau$ with marginal coverage $\geq \tau$
\end{algorithmic}
\end{algorithm}

The per-sample SCED solves in Phase~B are the main computational cost; each context sample may activate different binding constraints, yielding context-specific shadow prices $\bmu_i^*$ that drive the encoder to learn condition-dependent reserve allocation.

\textbf{Gradient approximation.} The quantile-sensitivity correction in Algorithm~\ref{alg:contextual} uses a shared estimate $\widehat{\nabla_{\bL}\rho_\varepsilon}$ computed from the full tuning set (lines 6--7). Strictly, the exact gradient of~\eqref{eq:contextual-objective} w.r.t.\ $\phi$ requires differentiating $\rho(\phi)$ through the encoder, yielding a global correction $(\partial_\rho \bar{V}) \cdot \nabla_\phi \rho(\phi)$ where $\bar{V}$ averages over contexts. Algorithm~\ref{alg:contextual} approximates this by treating each per-sample envelope gradient as a ``task gradient'' backpropagated through the encoder, with the shared quantile sensitivity serving as a first-order approximation. This reduces variance in the quantile-sensitivity estimate as the tuning set grows; bias from ignoring the global coupling through $\rho(\phi)$ may remain, and we do not provide a formal bound on this approximation error. The static case (Algorithm~\ref{alg:main}) remains exact. We therefore present the contextual extension as a practical heuristic motivated by the static theory, rather than as a theorem-backed contribution at the same level as Algorithm~\ref{alg:main}.

\section{Coupled SCED Study on the IEEE~118-Bus System}
\label{sec:application}

This section evaluates the proposed end-to-end reserve-learning formulation on the IEEE~118-bus system. It instantiates the generic value function $V(\btheta,\rho)$ with a zonal SCED and reports the main empirical comparison. The coupled SCED with inter-zone transfer constraints is the primary case study because it captures the interaction between reserve procurement and deliverability. The simpler decoupled zonal-reserve benchmark and the target-level sweep are reported in Appendix~\ref{app:zonal-reserve} as supporting diagnostics.

\subsection{System and Data}

\begin{table}[!t]
\caption{Zonal Aggregation}
\label{tab:zones}
\centering
\begin{tabular}{cccc}
\toprule
\textbf{Zone} & \textbf{Buses} & \textbf{Load (MW)} & \textbf{Gen.\ Cap.\ (MW)} \\
\midrule
1 & 1--12 & 423 & 550 \\
2 & 13--24 & 412 & 520 \\
3 & 25--36 & 445 & 580 \\
4 & 37--48 & 398 & 490 \\
5 & 49--60 & 467 & 610 \\
6 & 61--72 & 389 & 480 \\
7 & 73--84 & 456 & 590 \\
8 & 85--96 & 401 & 510 \\
9 & 97--108 & 478 & 620 \\
10 & 109--118 & 373 & 550 \\
\midrule
\textbf{Total} & \textbf{118} & \textbf{4,242} & \textbf{5,500} \\
\bottomrule
\end{tabular}
\end{table}

Our experimental setup has three layers: the IEEE~118-bus system provides the physical benchmark, dispatch is carried out over 10 aggregated zones, and uncertainty is generated at the coarser level of 5 geographic regions. We use the standard IEEE~118-bus case from the \texttt{pandapower} library~\citep{thurner2018pandapower}, aggregated into the 10 zones shown in Table~\ref{tab:zones}, with 54 generators. Each hourly problem is a single-period DC-SCED. Energy and reserve offer prices are drawn synthetically (seed 42), and the resulting LP is solved with HiGHS~\citep{huangfu2018parallelizing}. The context vector $\bxi_t$ contains forecast-side inputs---normalized load, solar, and wind forecasts, together with hour-of-day and month encodings---and is used only to describe the operating conditions under which uncertainty is generated.

\textbf{Uncertainty Dimensions and Allocation.} The uncertainty vector has dimension $d=15$ because we track three source types (load, solar, wind) in each of five regions. In other words, each hourly sample contains one forecast-error component for every source--region pair. A fixed linear map $A$ then converts these regional forecast errors into the 10 zonal net deviations seen by the dispatch model. Intuitively, regional errors are distributed to zones according to load share and resource footprint, so each zone inherits the uncertainty of the regions that supply it. The exposure vector used in~\eqref{eq:decoupled-sced} is the row $A_z^\top$, which represents zone~$z$'s uncertainty exposure.

\textbf{Data Generation.} The hourly uncertainty series is synthetic, but calibrated to real forecast-error patterns. OPSD (Germany) provides realistic source-specific scales, correlations, and temporal persistence~\citep{opsd2020}, while US~EIA hourly demand data for CAISO, ERCOT, PJM, MISO, and NYISO is used to capture cross-region dependence. Given the context $\bxi_t$, we generate the uncertainty vector using a context-dependent VAR(1) model. The construction is designed to capture two empirical features simultaneously: temporal persistence and context-dependent scale/correlation. Load uncertainty increases in high-load conditions, solar uncertainty is negligible at night and rises during daylight hours, and wind uncertainty increases with the wind forecast. The context also changes how load, solar, and wind forecast errors co-move, while a fixed regional correlation component captures shared weather exposure across nearby areas. The resulting covariance matrix $\Sigma(\bxi_t)$ defines a ground-truth ellipsoid shape through its Cholesky factor $\bL_{\mathrm{true}}(\bxi_t)$, which serves as a benchmark for the learned methods.

The dataset contains 35{,}040 hourly samples (4 years) and is split chronologically into 60\% training (21{,}024), 20\% tuning (7{,}008), 10\% calibration (3{,}504), and 10\% testing (3{,}504). The target coverage level is $\tau=0.95$. This four-way split cleanly separates shape learning, quantile estimation, conformal calibration, and final out-of-sample evaluation.

\subsection{Compared Methods}

All four methods use the same SCED model, data split, and split conformal calibration; they differ only in how the ellipsoid shape $\bL$ is chosen. Sample Covariance is the primary statistical baseline throughout because it is the standard correlation-aware construction. Independent is included as a diagonal ablation that isolates the cost of ignoring correlation. We compare four approaches:

\begin{enumerate}
\item \textbf{Sample Covariance.} $\bL = \chol(\hat\Sigma)$ from the sample covariance of $\cD_{\text{train}}$. Captures pairwise correlations but is not optimized for dispatch cost.

\item \textbf{Independent.} $\bL = \diag(\hat\sigma_1, \ldots, \hat\sigma_d)$ from marginal standard deviations of $\cD_{\text{train}}$. Ignores all cross-dimensional correlations.

\item \textbf{Learned (Static).} A single $\bL \in \R^{d \times d}$ trained via Algorithm~\ref{alg:main} ($K = 200$ iterations, learning rate $\eta = 0.01$, bandwidth $\varepsilon = 0.5$, gradient clipping at norm~10) to minimize SCED cost, initialized from Sample Covariance.

\item \textbf{Learned (Contextual).} A secondary variant, reported for completeness, uses a multilayer perceptron (MLP) encoder $\bL_\phi(\bxi)$ with hidden dimensions $[128, 64]$ and rectified linear unit (ReLU) activations, outputting $d(d+1)/2 = 120$ values that fill the lower triangle of $\bL$ (with $\exp(\cdot)$ on diagonal entries for positivity, followed by trace normalization). Trained via Algorithm~\ref{alg:contextual} with batch size 8, learning rate $3 \times 10^{-4}$ (Adam), gradient clipping (max norm 1.0 on encoder parameters), and early stopping with patience 400. Initialized from Learned (Static) by setting the final-layer bias to the vectorized $\bL_{\mathrm{static}}$.
\end{enumerate}

\subsection{Base Zonal SCED Model}

This subsection gives the concrete SCED instantiation of the generic robust value function $V(\btheta,\rho)$ in~\eqref{eq:robust}. The coupled study builds on the same single-period zonal SCED core used in the appendix decoupled benchmark. The decision variables are generator dispatch $g_i$ and reserve procurement $r_i$. Uncertainty enters only through the zonal reserve margins $R_z^{\min}$:
\begin{equation}\label{eq:decoupled-sced}
\begin{aligned}
\min_{g,r}\quad & \sum_{i \in \cG}(c_i^g g_i + c_i^r r_i) \\
\text{s.t.}\quad
& \sum_{i\in\cG} g_i = \sum_{b\in\cB} D_b, \\
& \underline g_i \le g_i \le \bar g_i - r_i,\ r_i\ge 0, \quad \forall i\in\cG,\\
& \sum_{i \in \cG_z} r_i \ge \rho\|\bL^\top A_z\|_2, \quad \forall z\in\cZ,
\end{aligned}
\end{equation}
where $A_z^\top$ is the $z$-th row of $A$. The term $\rho\|\bL^\top A_z\|_2$ is the worst-case zonal net-deviation hedge induced by the current ellipsoidal uncertainty set. Thus $R_z^{\min} = \rho\|\bL^\top A_z\|_2$ is the only channel through which uncertainty enters the dispatch, and its dependence on $(\bL,\rho)$ is explicit. This base model is useful both conceptually and computationally: it isolates the economic effect of learning the uncertainty-set geometry, while the closed-form gradients in~\eqref{eq:grad-sigma-L} remain transparent.

\subsection{Main Experiment: Coupled SCED with Transfer Constraints}

The main experiment augments the base zonal SCED with inter-zone transfer constraints. This is the more operationally relevant formulation because reserve requirements now compete with transfer headroom: a zone cannot rely arbitrarily on imports or exports to cover its uncertainty.

Specifically, for a subset of tight zones $\cZ_{\text{tight}} \subseteq \cZ$, we impose
\begin{equation}\label{eq:coupled-sced}
\left|\sum_{i \in \cG_z} g_i - D_z\right| + \rho\|\bL^\top A_z\|_2 \leq T_z^{\max}, \quad \forall z \in \cZ_{\text{tight}},
\end{equation}
where $D_z := \sum_{b \in \cB_z} D_b$ is the total load in zone $z$ and $T_z^{\max}$ is the transfer-capacity limit. The transfer limits are fixed ex ante from the base decoupled benchmark. The three zones with the highest reserve shadow prices under the sample-covariance baseline are tightened ($\alpha_{\text{tight}} = 0.90$), while the remaining zones retain loose limits ($\alpha_{\text{loose}} = 1.50$); in this study, the tightened zones are 5, 8, and 10. The limits are defined by
\begin{equation}
T_z^{\max} = \alpha_z \left(\left|\sum_{i\in\cG_z} g_i^{\mathrm{base}} - D_z\right| + \hat{\rho}_{\varepsilon} \|\bL_{\mathrm{base}}^\top A_z\|_2\right),
\end{equation}
where $\bL_{\mathrm{base}} = \chol(\hat\Sigma)$ is the Sample Covariance Cholesky factor, $g^{\mathrm{base}}$ is the corresponding decoupled dispatch, and $\hat{\rho}_{\varepsilon}$ is the smoothed $\tau$-quantile on the tuning set (which may exceed the conformal radius, ensuring training-time feasibility). These limits are fixed once and then held constant across all four methods.

The transfer constraints introduce additional dual variables $\lambda_z$. Proposition~\ref{thm:envelope} continues to apply without modification; the only change is that the effective sensitivity weight becomes the combined dual $(\mu_z + \lambda_z)$:
\begin{equation}\label{eq:coupled-gradient}
\nabla_{\bL} V = \sum_{z \in \cZ} (\mu_z^* + \lambda_z^*)\, \nabla_{\bL}\, \sigma_{\cU_{\bL,\rho}}(A_z),
\end{equation}
where $\lambda_z^* = 0$ for non-tight zones. The absolute value in~\eqref{eq:coupled-sced} is implemented via two linear inequalities (Lemma~\ref{lem:robust-abs}): an upper bound $\sum_{i \in \cG_z} g_i - D_z + \rho\|\bL^\top A_z\|_2 \leq T_z^{\max}$ and a lower bound $-(\sum_{i \in \cG_z} g_i - D_z) + \rho\|\bL^\top A_z\|_2 \leq T_z^{\max}$. The transfer dual $\lambda_z^*$ is the sum of the duals on these two constraints; typically at most one binds per zone, although both may bind in the degenerate case of zero net export. No new gradient derivation is required; only the relevant dual vector changes.

\begin{table}[!t]
\caption{Main Experiment: Coupled SCED with Transfer Constraints}
\label{tab:coupled}
\centering\footnotesize\setlength{\tabcolsep}{4pt}
\begin{tabular}{lcccc}
\toprule
\textbf{Method} & \textbf{Cost (\$/hr)} & \textbf{Reserve (MW)} & \textbf{Calibration Rate$^*$} & \textbf{Test Coverage [CI]$^\dagger$} \\
\midrule
Sample Covariance & 100{,}496 & 1{,}572 & 0.950 & 0.980 [.967, .990] \\
Independent & 100{,}473 & 1{,}563 & 0.950 & 0.986 [.975, .994] \\
Learned (Static) & 95{,}683 & 977 & 0.950 & 0.970 [.948, .987] \\
Learned (Contextual)$^\ddagger$ & 97{,}178 & 1{,}187 & 0.950 & 0.981 [.964, .994] \\
\bottomrule
\multicolumn{5}{l}{\footnotesize $^*$Calibration inclusion rate (empirical fraction of calibration points inside the set).} \\
\multicolumn{5}{l}{\footnotesize $^\dagger$Block bootstrap 95\% CI (block length 24\,hr, 10{,}000 replicates).} \\
\multicolumn{5}{l}{\footnotesize $^\ddagger$Contextual results for a single training seed.}
\end{tabular}
\end{table}

Table~\ref{tab:coupled} is the main numerical result. Relative to the Sample Covariance baseline, Learned (Static) lowers cost from \$100{,}496/hr to \$95{,}683/hr, a 4.8\% reduction, while reducing reserve procurement from 1{,}572 to 977~MW and maintaining 0.970 test coverage [0.948, 0.987]. Independent is reported as a diagonal ablation; the learned static method also improves on it. The coupled constraints raise costs for all methods, but the increase is materially smaller for the learned shapes. In the calibrated evaluation solves, the baseline shapes yield $\lambda_z^* > 0$ for all three tightened zones ($z \in \{5, 8, 10\}$), whereas the learned shapes give $\lambda_8^* = \lambda_{10}^* = 0$ (only zone~5 binds), because the learned reserve allocations leave transfer headroom in zones~8 and~10.

Appendix~\ref{app:zonal-reserve} reports the simpler decoupled benchmark and a target-level sweep. Relative to that benchmark, the baseline methods experience \$484--529/hr cost increases under coupling, whereas Learned (Static) increases by only \$212/hr. This indicates that the learned uncertainty geometry remains advantageous once reserve requirements interact with transfer deliverability.
\section{Discussion}
\label{sec:discussion}

The empirical pattern is consistent across the coupled and decoupled studies: once calibration is held fixed, the cost gains come from reshaping the uncertainty set rather than from relaxing coverage.

\subsection{Practical and Computational Implications}

Learned sets produce reserve shadow prices $\mu_z^*$ that are better aligned with economically relevant uncertainty directions, improving price signals in SCED-based markets. Context-dependent sets $\bL_\phi(\bxi)$ further adapt to varying conditions (e.g., solar uncertainty near zero at night, shifting wind correlations during weather fronts). Computationally, both regimes require one SCED solve per training iteration to extract dual multipliers, but no differentiation through the solver; the decoupled regime additionally admits closed-form support-function gradients.

\subsection{Limitations and Extensions}

\textbf{Distribution Shift.} Learned uncertainty sets are trained on historical data and may not generalize to extreme events or structural changes (e.g., new generation capacity). Hybrid approaches combining learned sets with worst-case bounds could provide robustness to rare events.

\textbf{Scope of Baselines.} The experiments compare four ellipsoidal uncertainty sets to isolate the effect of learning the ellipsoidal uncertainty geometry. Alternative geometries---budgeted polyhedral~\citep{bertsimas2004price}, Wasserstein balls~\citep{esfahani2018data}, moment-based ambiguity sets~\citep{xiong2017distributionally}---would provide complementary baselines but differ in both geometry and calibration, complicating attribution. Extension to learned non-ellipsoidal sets is a natural direction.

\textbf{Temporal and Statistical Scope.} The formulation treats each hour independently; extension to multi-period unit commitment is natural, as the envelope gradient approach extends directly. The contextual method results are for a single seed; multi-seed evaluation would strengthen the empirical claims.

\textbf{Asymmetric Uncertainty Costs.} The gauge treats all directions symmetrically. A cost-weighted CDF $F_{\btheta}^w(r) = \E[w(U)\mathbf{1}\{s_{\btheta}(U)\le r\}]/\E[w(U)]$ can replace the uniform CDF for asymmetric costs; the profiled gradient applies unchanged.

\textbf{Exchangeability and Coverage Validity.} Proposition~\ref{thm:conformal} requires exchangeability, which the VAR(1) data may violate (see Remark after Theorem~\ref{thm:consistency-eps}). Block bootstrap CIs in Table~\ref{tab:coupled} and Appendix~\ref{app:zonal-reserve} quantify the effect of serial dependence; for rigorous finite-sample validity, block conformal methods~\citep{barber2023conformal} should be employed.

\section{Conclusion}
\label{sec:conclusion}

This paper formulates correlated reserve-set design in SCED as an end-to-end trainable robust optimization problem. By profiling the coverage constraint into a shape-dependent radius and using KKT/dual sensitivities from the SCED solve, the original coverage-constrained bilevel formulation becomes a single-stage differentiable objective that can be optimized without backpropagating through the solver. Training-time smoothed quantile estimation and deployment-time split conformal calibration separate optimization of the shape from final coverage control, while the same task gradient can be passed upstream to contextual encoders as a secondary extension.

On the IEEE~118-bus system, the main coupled transfer-constrained study shows that the learned static ellipsoid lowers dispatch cost by about 4.8\% relative to the Sample Covariance baseline while maintaining empirical coverage above the target. The appendix decoupled benchmark clarifies the same mechanism in a simpler setting and confirms that the gains are driven primarily by task-aligned reserve allocation rather than by looser coverage. Future work includes multi-period formulations, stronger empirical study of contextual learning, and robust extensions for structural shift and extreme events.

\section*{Acknowledgements}
Patrick Jaillet and Owen Shen acknowledge funding from ONR grant N00014-24-1-2470 and AFOSR grant FA9550-23-1-0190. Haihao Lu acknowledges funding from AFOSR Grant No. FA9550-24-1-0051 and ONR Grant No. N000142412735.

\bibliographystyle{plainnat}
\bibliography{referenceTPS}

\newpage
\appendix

\section{Supporting Definitions and Lemmas}
\label{app:supporting}

\begin{definition}[Clarke Subdifferential]\label{def:clarke}
Let $h:\R^p\to\R$ be locally Lipschitz. The Clarke subdifferential at $x$ is
$\clsubd h(x) := \conv\{\lim_{k\to\infty}\nabla h(x_k) : x_k\to x,\ h \text{ diff.\ at }x_k\}$.
\end{definition}

\begin{lemma}[Support Function Scaling]\label{lem:support-scaling}
If $C \subseteq \R^d$ is nonempty and $\rho > 0$, then $\sigma_{\rho C}(\bw) = \rho \cdot \sigma_C(\bw)$.
\end{lemma}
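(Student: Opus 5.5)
The plan is to verify the identity directly from the definition of the support function, using the bijection $\bu \mapsto \rho\bu$ between $C$ and $\rho C := \{\rho\bu : \bu \in C\}$. Since $\rho>0$, this map is a bijection with inverse $\bv \mapsto \bv/\rho$. Every element of $\rho C$ can therefore be written uniquely as $\rho\bu$ with $\bu\in C$.

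The computation then runs as
\[
\sigma_{\rho C}(\bw) = \sup_{\bv\in\rho C}\ip{\bw}{\bv} = \sup_{\bu\in C}\ip{\bw}{\rho\bu} = \sup_{\bu\in C}\rho\,\ip{\bw}{\bu}.
\]
The first equality is the definition. The second is the change of variables $\bv=\rho\bu$, which reindexes the same set of values. The third uses bilinearity of the inner product.

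The only point needing care is pulling the positive scalar out of the supremum, including the case $\sigma_C(\bw)=+\infty$. For $\rho>0$, the map $t\mapsto \rho t$ is strictly increasing on $\R$, so it preserves upper bounds in both directions:
\begin{itemize}
\item if $M$ bounds $\{\ip{\bw}{\bu}:\bu\in C\}$ from above, then $\rho M$ bounds $\{\rho\ip{\bw}{\bu}\}$ from above;
\item conversely, if $M'$ bounds $\{\rho\ip{\bw}{\bu}\}$ from above, then $M'/\rho$ bounds $\{\ip{\bw}{\bu}\}$ from above.
\end{itemize}
Hence $\sup_{\bu\in C}\rho\,\ip{\bw}{\bu} = \rho\sup_{\bu\in C}\ip{\bw}{\bu}$. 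This holds with the convention $\rho\cdot(+\infty)=+\infty$, because one side is unbounded exactly when the other is.

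Nonemptiness of $C$ guarantees the supremum is over a nonempty set, so it is not $-\infty$ and the convention issue does not arise. This step is not really an obstacle. It is the only place where the hypotheses $\rho>0$ and $C\neq\emptyset$ are used. Under Assumption~\ref{asm:gauge}, $C=\cU_{\btheta,1}$ is compact, so the supremum is in fact finite and attained, which gives the scaling $\sigma_{\cU_{\btheta,\rho}}=\rho\,\sigma_{\cU_{\btheta,1}}$ used in Proposition~\ref{thm:envelope}(b).
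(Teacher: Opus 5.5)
Your proof is correct and follows the same route as the paper: reindex the supremum via $\bv=\rho\bu$, then pull the positive scalar $\rho$ out using bilinearity and the monotonicity of $t\mapsto\rho t$. Your extra care about the $+\infty$ case and the role of nonemptiness is a harmless refinement of the paper's one-line computation.
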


\begin{proof}
$\sigma_{\rho C}(\bw) = \sup_{\bu \in \rho C} \ip{\bw}{\bu} = \rho \sup_{\bm{v} \in C} \ip{\bw}{\bm{v}} = \rho \sigma_C(\bw)$.
\end{proof}

\begin{lemma}[Strong Duality]\label{lem:strong-duality}
Under Assumption~\ref{asm:slater}, $V(\btheta,\rho)=\max_{\bmu\ge 0} q(\bmu;\btheta,\rho)$ and $\cM^*(\btheta,\rho)\neq\emptyset$, where $q$ is the Lagrange dual function.
\end{lemma}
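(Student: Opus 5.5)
The plan is to apply the standard convex-duality theorem for problems with a relative-interior Slater point. Because this is the conic/Lagrangian form, I will use the version in Rockafellar's \emph{Convex Analysis} (Theorem~28.2) or Bertsekas' Proposition~5.3.1. Fix $(\btheta,\rho)$ with $V(\btheta,\rho)<+\infty$. Under Assumption~\ref{asm:gauge}, $\cU_{\btheta,\rho}$ is compact, so each constant $c_j:=\sigma_{\cU_{\btheta,\rho}}(\bw_j)$ is finite. The robust problem~\eqref{eq:robust} therefore becomes an ordinary convex program:
\[
\inf_{\bx\in\cX} f(\bx)\quad\text{s.t.}\quad a_j(\bx)-(b_j-c_j)\le 0,\quad j=1,\ldots,m.
\]
Its Lagrangian is $\mathcal{L}(\bx,\bmu)=f(\bx)+\sum_j\mu_j\bigl(a_j(\bx)+c_j-b_j\bigr)$, and the dual function is $q(\bmu;\btheta,\rho)=\inf_{\bx\in\cX}\mathcal{L}(\bx,\bmu)$. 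Weak duality $q(\bmu)\le V$ for $\bmu\ge 0$ is immediate, since the multiplier term is nonpositive at any feasible $\bx$.

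For the reverse inequality and attainment I will use the perturbation/separation argument. Define the set
\[
\mathcal{A}:=\{(\bm{z},t): \exists\,\bx\in\cX,\ a_j(\bx)+c_j-b_j\le z_j\ \forall j,\ f(\bx)\le t\}.
\]
It is convex because $\cX$, $f$ and the $a_j$ are convex. The point $(\bm{0},V-\delta)$ lies outside $\mathcal{A}$ for every $\delta>0$, so we may separate $(\bm 0,V)$ from the relative interior of $\mathcal{A}$. This gives a nonzero $(\bmu,\mu_0)\ge 0$ with
\[
\mu_0 t+\ip{\bmu}{\bm z}\ge \mu_0 V\quad\text{for all }(\bm z,t)\in\mathcal{A}.
\]
Finiteness of $V$ uses the standing assumption $V<\infty$ together with $\inf f>-\infty$, which guarantees $V>-\infty$.

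The key step is to show $\mu_0>0$, and this is where the Slater point enters. Plug in $\bar\bx$, with $\bar z_j=a_j(\bar\bx)+c_j-b_j<0$ and $t=f(\bar\bx)<\infty$. If $\mu_0=0$, the separation inequality reads $\ip{\bmu}{\bar{\bm z}}\ge 0$. With $\bmu\ge0$ nonzero and every $\bar z_j<0$, this is impossible. Normalizing $\mu_0=1$, I get $f(\bx)+\sum_j\mu_j(a_j(\bx)+c_j-b_j)\ge V$ for all $\bx\in\cX$, that is, $q(\bmu)\ge V$. Combined with weak duality this gives $q(\bmu)=V$, so $\bmu\in\cM^*(\btheta,\rho)$ and the maximum is attained.

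The main obstacle is technical. The $a_j$ may be extended-valued, and $f$ may take the value $+\infty$. The requirement $\bar\bx\in\ri(\cX)$ is what makes the separation proper: it places a point of the effective domain in the right relative interior, so the separating hyperplane is not vertical along $\mathrm{aff}(\cX)$. I would handle this by citing the ri-version of the Slater theorem directly (Rockafellar, Theorem~28.2) rather than reproving the separation in full generality.
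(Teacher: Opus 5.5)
Your argument is correct and is the standard Slater-point separation proof, which is exactly what the paper relies on: it states Lemma~\ref{lem:strong-duality} without a written proof, as the textbook consequence of Assumption~\ref{asm:slater}. One correction to your last paragraph: the non-vertical hyperplane ($\mu_0>0$) comes from strict feasibility alone, as your own step shows, and since $\mathcal{A}$ is upward-closed it is full-dimensional (which also gives $(\bmu,\mu_0)\ge 0$ and $(\bm 0,V)\notin\mathrm{int}\,\mathcal{A}$), so no relative-interior care is needed; Rockafellar's Theorem~28.2 as stated actually asks for a strictly feasible point in $\ri(\cX\cap\mathrm{dom}\,f)$ rather than $\ri(\cX)$, so your self-contained separation argument (or Bertsekas' version, which needs only $\bar{\bx}\in\cX$) is the cleaner justification.
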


\begin{proof}[Proof of Proposition~\ref{prop:profiling}]
(i) $\cU_{\btheta,\rho_1} \subseteq \cU_{\btheta,\rho_2}$ for $\rho_1 \leq \rho_2$ implies support functions increase, feasible regions shrink, hence $V(\btheta,\rho_1) \leq V(\btheta,\rho_2)$.
(ii) $F_{\btheta}(\rho) \geq \tau$ iff $\rho \geq \rho_P(\btheta)$.
(iii) For any feasible $(\btheta,\rho)$, $V(\btheta,\rho) \geq V(\btheta,\rho_P(\btheta))$, so the optimal choice is $\rho = \rho_P(\btheta)$.
\end{proof}

\begin{lemma}[Robust Absolute-Value Constraints]\label{lem:robust-abs}
For nonempty $\cU\subseteq\R^d$, $\sup_{u\in\cU}|f+\ip{w}{u}|\le F$ is equivalent to $f+\sigma_{\cU}(w)\le F$ and $-f+\sigma_{\cU}(-w)\le F$. For centrally symmetric $\cU$, this simplifies to $|f|+\sigma_{\cU}(w)\le F$.
\end{lemma}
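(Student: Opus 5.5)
The plan is to reduce both directions to scalar inequalities, using that the supremum of an absolute value splits into two suprema. For any real $t$ we have $|t| = \max\{t,-t\}$. Hence
\[
\sup_{u\in\cU}|f+\ip{w}{u}| = \max\Bigl\{\sup_{u\in\cU}\bigl(f+\ip{w}{u}\bigr),\ \sup_{u\in\cU}\bigl(-f+\ip{-w}{u}\bigr)\Bigr\},
\]
since a supremum of a pointwise maximum is the maximum of the two suprema. This interchange holds in the extended reals, so it is valid even if some supremum is $+\infty$. Nonemptiness of $\cU$ ensures both suprema are well defined and $>-\infty$.

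Next I evaluate each supremum with the support function. The constant $f$ pulls out, giving $\sup_{u\in\cU}(f+\ip{w}{u}) = f+\sigma_{\cU}(w)$. Likewise $\sup_{u\in\cU}(-f-\ip{w}{u}) = -f+\sigma_{\cU}(-w)$. A maximum of two quantities is $\le F$ if and only if each one is $\le F$. This gives the claimed equivalence with the pair $f+\sigma_{\cU}(w)\le F$ and $-f+\sigma_{\cU}(-w)\le F$, and the argument needs no compactness.

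For the symmetric case, central symmetry ($\cU=-\cU$) gives $\sigma_{\cU}(-w)=\sup_{u\in\cU}\ip{w}{-u}=\sup_{v\in-\cU}\ip{w}{v}=\sigma_{\cU}(w)$. The two inequalities then become $\pm f+\sigma_{\cU}(w)\le F$. Since $\max\{f,-f\}=|f|$, they are jointly equivalent to $|f|+\sigma_{\cU}(w)\le F$.

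There is no real obstacle. The only step needing care is the sup–max interchange when $\sigma_{\cU}(w)=+\infty$, where both sides are infinite and the equivalence holds trivially. The result applies in particular to the ellipsoids $\cU_{\bL,\rho}$, which are centrally symmetric, and that is what justifies the form of~\eqref{eq:coupled-sced}.
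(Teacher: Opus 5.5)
Your proof is correct and is essentially the paper's argument: write the absolute value as $\max\{t,-t\}$, interchange the supremum with the maximum to get $\max\{f+\sigma_{\cU}(w),\,-f+\sigma_{\cU}(-w)\}$, and use $\sigma_{\cU}(-w)=\sigma_{\cU}(w)$ under central symmetry. Your remarks on nonemptiness and on the case $\sigma_{\cU}(w)=+\infty$ make explicit points that the paper leaves implicit.
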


\begin{proof}
$\sup_{\bu\in\cU}|f+\bw^\top\bu|=\max\{\sup_{\bu}(f+\bw^\top\bu),\,\sup_{\bu}(-f-\bw^\top\bu)\}=\max\{f+\sigma_\cU(\bw),\,-f+\sigma_\cU(-\bw)\}$. If $\cU$ is centrally symmetric, $\sigma_\cU(-\bw)=\sigma_\cU(\bw)$, giving $|f|+\sigma_\cU(\bw)$.
\end{proof}

\section{Proof of Envelope Gradient Formula}
\label{app:envelope-proof}

\begin{proof}[Proof of Proposition~\ref{thm:envelope}]
Fix $(\btheta,\rho)$ with $V(\btheta,\rho)<+\infty$. By Lemma~\ref{lem:strong-duality}, $V(\btheta,\rho)=\max_{\bmu\ge 0}q(\bmu;\btheta,\rho)$ with $\cM^*(\btheta,\rho)\neq\emptyset$. The dual decomposes as $q(\bmu;\btheta,\rho)=\tilde q(\bmu)+\sum_j\mu_j(\sigma_{\cU_{\btheta,\rho}}(\bw_j)-b_j)$, where $\tilde q(\bmu):=\inf_{\bx\in\cX}f(\bx)+\sum_j\mu_ja_j(\bx)$ is independent of $(\btheta,\rho)$.

\emph{Local Lipschitzness via bounded multipliers.}
By Assumption~\ref{asm:slater}, there exists strictly feasible $\bar{\bx}$ with slack $s_j:=b_j-a_j(\bar{\bx})-\sigma_{\cU_{\btheta,\rho}}(\bw_j)>0$. By continuity of $\sigma$ in $(\btheta,\rho)$, there is a neighborhood $\mathcal{N}$ of $(\btheta,\rho)$ with slack $\ge s_j/2$ uniformly. For any $\bmu\ge 0$ and $(\btheta',\rho')\in\mathcal{N}$, evaluating the Lagrangian at $\bar{\bx}$ gives $q(\bmu;\btheta',\rho')\le f(\bar{\bx})-(s_{\min}/2)\|\bmu\|_1$ where $s_{\min}:=\min_j s_j$. Since $q(\bm{0};\btheta',\rho')=\inf_{\bx\in\cX}f(\bx)>-\infty$ (Assumption~\ref{asm:slater}), every dual optimizer satisfies $\|\bmu^*\|_1\le 2(f(\bar{\bx})-\inf_{\bx\in\cX}f(\bx))/s_{\min}$ uniformly on $\mathcal{N}$. Combined with locally bounded gradients of $\sigma$ (Assumption~\ref{asm:sigma-diff}), $V$ is locally Lipschitz on $\mathcal{N}$.

\emph{Envelope formula.}
Since $V=\max_{\bmu\ge 0}q(\bmu;\btheta,\rho)$ with each $q(\bmu;\cdot)$ being $C^1$ and the maximizing set locally compact, the Danskin/Clarke max-envelope theorem~\citep{bonnans2000perturbation,clarke1990optimization} gives $\clsubd_{\btheta}V(\btheta,\rho)=\conv\{\nabla_{\btheta}q(\bmu;\btheta,\rho):\bmu\in\cM^*\}$. Substituting $\nabla_{\btheta}q=\sum_j\mu_j\nabla_{\btheta}\sigma_j$ yields~\eqref{eq:env-theta}; any single $\bmu^*\in\cM^*$ gives a Clarke subgradient. If the optimizer is unique, $\clsubd$ is a singleton and $V$ is differentiable. Part~(b) follows identically using $\sigma_{\cU_{\btheta,\rho}}(\bw_j)=\rho\sigma_{\cU_{\btheta,1}}(\bw_j)$, giving $\partial_\rho q=\sum_j\mu_j\sigma_{\cU_{\btheta,1}}(\bw_j)$.
\end{proof}

\section{Proofs of Statistical Results}
\label{app:statistical-proofs}

\begin{lemma}[Quantile Derivative]\label{lem:quantile-deriv}
Under Assumption~\ref{asm:quantile-reg}, $\nabla_{\btheta} \rho_P(\btheta) = -\nabla_{\btheta} F_{\btheta}(\rho_P(\btheta))/f_{\btheta}(\rho_P(\btheta))$.
\end{lemma}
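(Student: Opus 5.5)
The plan is to apply the implicit function theorem to the map $G(\btheta',r):=F_{\btheta'}(r)-\tau$ near the point $(\btheta,\rho_P(\btheta))$. Assumption~\ref{asm:quantile-reg} says $G$ is $C^1$ in a neighborhood of this point. It also gives $\partial_r G(\btheta,\rho_P(\btheta))=f_{\btheta}(\rho_P(\btheta))>0$, which is exactly the nondegeneracy condition the theorem needs.

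The first step is to check that $G(\btheta,\rho_P(\btheta))=0$. By definition $\rho_P(\btheta)=\inf\{r>0:F_{\btheta}(r)\ge\tau\}$. Since $F_{\btheta}$ is continuous near $\rho_P(\btheta)$, right-continuity gives $F_{\btheta}(\rho_P(\btheta))\ge\tau$. Points just to the left of $\rho_P(\btheta)$ have $F_{\btheta}<\tau$, so continuity also gives $F_{\btheta}(\rho_P(\btheta))\le\tau$, hence equality. Here I use $\tau\in(0,1)$ and that $s_{\btheta}\ge 0$ has positive mass above zero, so $\rho_P(\btheta)>0$ and the infimum is attained in the interior.

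The implicit function theorem then yields a neighborhood $N$ of $\btheta$ and a $C^1$ map $r(\cdot)$ on $N$ with $r(\btheta)=\rho_P(\btheta)$ and $F_{\btheta'}(r(\btheta'))=\tau$. It also gives the derivative formula $\nabla r(\btheta)=-\nabla_{\btheta}F_{\btheta}(\rho_P(\btheta))/f_{\btheta}(\rho_P(\btheta))$. What remains is to show $r(\btheta')=\rho_P(\btheta')$ for $\btheta'$ near $\btheta$, so that the formula is really the gradient of $\rho_P$.

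This identification is the main obstacle. The implicit function theorem only gives local uniqueness of the root inside a box $N\times(\rho_P(\btheta)-\delta,\rho_P(\btheta)+\delta)$. By shrinking $N$ and $\delta$, continuity of the density keeps $\partial_r F_{\btheta'}(r)>0$ on the box, so $F_{\btheta'}$ is strictly increasing in $r$ there and $r(\btheta')$ is its unique $\tau$-crossing in the interval. I still have to rule out that the infimum defining $\rho_P(\btheta')$ falls at some point $r\le\rho_P(\btheta)-\delta$, where $F_{\btheta'}(r)\ge\tau$.

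To handle this I would use two facts. First, $F_{\btheta}(\rho_P(\btheta)-\delta)<\tau$ by the definition of $\rho_P$ as an infimum. Second, joint continuity of $F$ at $(\btheta,\rho_P(\btheta)-\delta)$ gives $F_{\btheta'}(\rho_P(\btheta)-\delta)<\tau$ for $\btheta'$ close to $\btheta$. Monotonicity of each CDF in $r$ then gives $F_{\btheta'}(r)<\tau$ for all $r\le\rho_P(\btheta)-\delta$. Hence $\rho_P(\btheta')=r(\btheta')$ on a possibly smaller neighborhood, and the claimed gradient formula follows.
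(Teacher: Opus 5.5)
Your proof is correct and follows the paper's route: continuity gives $F_{\btheta}(\rho_P(\btheta))=\tau$, and the implicit function theorem with $\partial_r F_{\btheta}=f_{\btheta}>0$ yields the formula, while your final step identifying the implicit-function branch with $\rho_P$ near $\btheta$ supplies a detail the paper's two-line proof leaves implicit. One small fix: justify $\rho_P(\btheta)>0$ by continuity of $F_{\btheta}$ at $\rho_P(\btheta)$ rather than by ``positive mass above zero'', since $\rho_P(\btheta)=0$ would force $F_{\btheta}(0)\ge\tau$ while $F_{\btheta}\equiv 0$ on $(-\infty,0)$, a jump that the assumption rules out.
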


\begin{proof}
Continuity of $F_{\btheta}$ (Assumption~\ref{asm:quantile-reg}) gives $F_{\btheta}(\rho_P(\btheta))=\tau$. The implicit function theorem applied to this identity, with $\partial_r F_{\btheta}=f_{\btheta}>0$, yields the result.
\end{proof}

\begin{assumption}[Smoothing Function]\label{asm:smoothing}
$\Phi: \R \to [0,1]$ is $C^1$, nondecreasing, with $\Phi(-\infty)=0$, $\Phi(+\infty)=1$, and $\varphi := \Phi'$ bounded, uniformly continuous, and strictly positive on $\R$ (e.g., Gaussian or logistic kernel).
\end{assumption}

\begin{assumption}[Score Differentiability]\label{asm:score-diff}
$\btheta \mapsto s_{\btheta}(\bu)$ is $C^1$ with $\norm{\nabla_{\btheta} s_{\btheta}(U)} \leq M(U)$ a.s.\ for some integrable $M$.
\end{assumption}

\begin{lemma}[Smoothed CDF Derivatives]\label{lem:smoothed-derivatives}
Under Assumptions~\ref{asm:smoothing}--\ref{asm:score-diff}, $\partial_r F_{\btheta,\varepsilon}(r) = \varepsilon^{-1}\E[\varphi((r-s_{\btheta}(U))/\varepsilon)]$ and $\nabla_{\btheta} F_{\btheta,\varepsilon}(r) = -\varepsilon^{-1}\E[\varphi((r-s_{\btheta}(U))/\varepsilon)\nabla_{\btheta} s_{\btheta}(U)]$.
\end{lemma}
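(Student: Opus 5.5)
The plan is to prove both identities by differentiating under the expectation sign in the definition \eqref{eq:smoothed-cdf}. For fixed $\bu$, set
\[
h(\btheta,r;\bu):=\Phi\big((r-s_{\btheta}(\bu))/\varepsilon\big).
\]
By Assumption~\ref{asm:smoothing} and Assumption~\ref{asm:score-diff}, $h$ is $C^1$ in $(\btheta,r)$. The chain rule then gives the pointwise derivatives
\[
\partial_r h=\varepsilon^{-1}\varphi\big((r-s_{\btheta}(\bu))/\varepsilon\big),
\qquad
\nabla_{\btheta} h=-\varepsilon^{-1}\varphi\big((r-s_{\btheta}(\bu))/\varepsilon\big)\,\nabla_{\btheta}s_{\btheta}(\bu).
\]
The two claimed formulas are exactly the expectations of these expressions. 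So the whole task is to justify interchanging $\E$ and differentiation.

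For the $r$-derivative, I will argue with a mean-value difference quotient. For $|\delta|\le 1$, the quotient
\[
\big[h(\btheta,r+\delta;U)-h(\btheta,r;U)\big]/\delta
\]
equals $\partial_r h$ evaluated at some intermediate point. It is therefore bounded by the constant $\varepsilon^{-1}\|\varphi\|_\infty$, which is finite by Assumption~\ref{asm:smoothing}. The quotient also converges pointwise to $\partial_r h(\btheta,r;U)$, by continuity of $\varphi$. Dominated convergence then yields the first identity. No integrability assumption on the score is needed here, since the bound is a constant.

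For the $\btheta$-gradient I take directional derivatives along an arbitrary direction $\bm{v}$ and apply the mean value theorem to $t\mapsto h(\btheta+t\bm{v},r;U)$. The difference quotient is bounded by
\[
\varepsilon^{-1}\|\varphi\|_\infty\,\sup_{|t|\le t_0}\big\|\nabla_{\btheta}s_{\btheta+t\bm{v}}(U)\big\|\,\|\bm{v}\|.
\]
This is the main obstacle: Assumption~\ref{asm:score-diff} bounds $\|\nabla_{\btheta}s_{\btheta}(U)\|$ by $M(U)$. I will read that as a uniform bound over $\btheta$ in a neighborhood, which is its intended meaning. With that reading, the quotient is dominated by $\varepsilon^{-1}\|\varphi\|_\infty M(U)\|\bm{v}\|$, which is integrable, and dominated convergence gives the directional derivative. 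If only a pointwise bound were available, I would instead fall back on the local boundedness of the gradient; in the ellipsoidal case this is explicit, since $\|\nabla_{\bL}s_{\bL}(\bu)\|\le\|\bL^{-1}\|^2\|\bu\|$ is locally uniform in $\bL$ and integrable when $U$ has a first moment.

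Finally, the directional derivatives are linear in $\bm{v}$ and given by the stated expectation. That expectation is continuous in $\btheta$: this follows by dominated convergence again, using the continuity of $\varphi$ and of $\nabla_{\btheta}s_{\btheta}$. Continuous, linear directional derivatives mean $F_{\btheta,\varepsilon}(r)$ is $C^1$ in $\btheta$, with gradient as stated.
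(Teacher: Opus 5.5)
Your proposal is correct and follows essentially the argument the paper relies on. The paper states Lemma~\ref{lem:smoothed-derivatives} without a written proof, implicitly differentiating under the expectation with dominated convergence via the bounds $\|\varphi\|_\infty/\varepsilon$ and $\|\varphi\|_\infty M(U)/\varepsilon$; you are right to read the bound $M(U)$ in Assumption~\ref{asm:score-diff} as locally uniform in $\btheta$, which is exactly the unstated condition the $\btheta$-gradient needs.
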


\begin{lemma}[Smoothed Quantile Sensitivity]\label{lem:smoothed-quantile-deriv}
If additionally $\partial_r F_{\btheta,\varepsilon}(\rho_{P,\varepsilon}(\btheta)) > 0$, then
\begin{equation}\label{eq:rhoeps-deriv}
\nabla_{\btheta} \rho_{P,\varepsilon}(\btheta) = \frac{\E[\varphi((\rho_{P,\varepsilon}(\btheta) - s_{\btheta}(U))/\varepsilon) \nabla_{\btheta} s_{\btheta}(U)]}{\E[\varphi((\rho_{P,\varepsilon}(\btheta) - s_{\btheta}(U))/\varepsilon)]}.
\end{equation}
This is a weighted average of $\nabla_{\btheta}s_{\btheta}(U)$ with kernel weights concentrated near the quantile boundary.
\end{lemma}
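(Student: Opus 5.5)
The plan is to apply the implicit function theorem to the defining identity of the smoothed quantile, mirroring the proof of Lemma~\ref{lem:quantile-deriv}, with the derivatives supplied by Lemma~\ref{lem:smoothed-derivatives}.

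\emph{Step 1: identify the quantile as a level set.} By Assumption~\ref{asm:smoothing}, $\varphi>0$, so $r\mapsto \Phi((r-s)/\varepsilon)$ is continuous and strictly increasing for each fixed $s$. Dominated convergence ($0\le\Phi\le1$) then makes $F_{\btheta,\varepsilon}$ continuous and strictly increasing in $r$, with limits $0$ and $1$. Since $\tau\in(0,1)$, the infimum in \eqref{eq:smoothed-cdf} is attained and is the unique root of $G(\btheta',r):=F_{\btheta',\varepsilon}(r)-\tau=0$. In particular $F_{\btheta,\varepsilon}(\rho_{P,\varepsilon}(\btheta))=\tau$, and the same holds for every $\btheta'$ near $\btheta$.

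\emph{Step 2: joint $C^1$ regularity of $G$.} Lemma~\ref{lem:smoothed-derivatives} supplies the partial derivatives of $G$. To invoke the implicit function theorem, these partials must also be jointly continuous near $(\btheta,\rho_{P,\varepsilon}(\btheta))$. For $\partial_r$, continuity follows from boundedness and uniform continuity of $\varphi$ together with continuity of $s_{\btheta'}(U)$ in $\btheta'$, using dominated convergence. For $\nabla_{\btheta}$, the integrand $\varphi(\cdot)\nabla_{\btheta}s_{\btheta'}(U)$ is continuous in $(\btheta',r)$, since $s$ is $C^1$. It is dominated by $\|\varphi\|_\infty M(U)$, provided the bound in Assumption~\ref{asm:score-diff} holds locally uniformly in $\btheta'$; I would read that assumption this way.

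This is the main obstacle: Assumption~\ref{asm:score-diff} is stated only pointwise in $\btheta$. I will state explicitly that a local uniform envelope $\sup_{\btheta'\in\mathcal N}\|\nabla s_{\btheta'}(U)\|\le M(U)$ is used, as is standard. This envelope holds, for instance, in the ellipsoidal case on a compact neighborhood where $\bL$ stays invertible.

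\emph{Step 3: apply the IFT and simplify.} With $\partial_r G>0$ at the root (the hypothesis of the lemma, and in fact automatic from $\varphi>0$), the implicit function theorem gives a $C^1$ local solution $r(\btheta')$. By the uniqueness in Step 1, $r(\btheta')=\rho_{P,\varepsilon}(\btheta')$. The IFT derivative is
\[
\nabla_{\btheta}\rho_{P,\varepsilon}(\btheta)=-\frac{\nabla_{\btheta}F_{\btheta,\varepsilon}(\rho_{P,\varepsilon})}{\partial_rF_{\btheta,\varepsilon}(\rho_{P,\varepsilon})}.
\]
Substituting the two formulas from Lemma~\ref{lem:smoothed-derivatives}, the factors $\varepsilon^{-1}$ cancel and the minus signs cancel, which yields \eqref{eq:rhoeps-deriv}.

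The weighted-average interpretation follows because the weights $\varphi((\rho_{P,\varepsilon}-s_{\btheta}(U))/\varepsilon)$ are positive and normalized by their expectation. These weights concentrate where $s_{\btheta}(U)$ lies within $O(\varepsilon)$ of $\rho_{P,\varepsilon}$ whenever $\varphi$ decays away from $0$, as it does for the Gaussian and logistic kernels.
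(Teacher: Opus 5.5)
Your proposal is correct and follows the route the paper intends. The paper gives no separate proof here, but the lemma is the implicit-function-theorem argument of Lemma~\ref{lem:quantile-deriv} applied to $F_{\btheta,\varepsilon}(\rho_{P,\varepsilon}(\btheta))=\tau$, with the derivatives from Lemma~\ref{lem:smoothed-derivatives}, so the $\varepsilon^{-1}$ factors and signs cancel exactly as you describe. Your reading of Assumption~\ref{asm:score-diff} as a locally uniform envelope in $\btheta$ is a fair repair of a point the paper leaves implicit, and that same envelope is already needed to differentiate under the expectation in Lemma~\ref{lem:smoothed-derivatives}.
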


\smallskip
The regularity conditions for Theorem~\ref{thm:consistency-eps} are:
\begin{enumerate}
\item[(A1)] $U_1,\ldots,U_{n_{\mathrm{tune}}}$ are i.i.d.\ from $P$.
\item[(A2)] Assumptions~\ref{asm:smoothing}--\ref{asm:score-diff} hold, $\varphi$ is uniformly continuous and strictly positive.
\item[(A3)] $r\mapsto F_{\btheta,\varepsilon}(r)$ is strictly increasing near $\rho_{P,\varepsilon}(\btheta)$ with $\partial_r F_{\btheta,\varepsilon}(\rho_{P,\varepsilon}(\btheta))>0$.
\item[(A4)] $V$ is differentiable at $(\btheta,\rho_{P,\varepsilon}(\btheta))$ with unique, continuous dual optimizer nearby.
\item[(A5)] $\nabla_{\btheta}\sigma_{\cU_{\btheta,\rho}}(\bw_j)$ is continuous in $(\btheta,\rho)$ near $(\btheta,\rho_{P,\varepsilon}(\btheta))$.
\end{enumerate}

\begin{proof}[Proof of Proposition~\ref{thm:conformal}]
Let $k := \lceil (n_{\mathrm{cal}}+1)\tau \rceil$ and write $S_{(1)} \le \cdots \le S_{(n_{\mathrm{cal}})}$ for the order statistics of the calibration scores. The definition in~\eqref{eq:conformal-rho} is equivalent to $\hat\rho_\tau = S_{(k)}$. Condition on $\hat{\btheta}$. Under Assumption~\ref{asm:exchange}, the scores $S_1,\ldots,S_{n_{\mathrm{cal}}},S_{\mathrm{new}}$ are exchangeable. Introduce i.i.d.\ $V_i\sim\mathrm{Unif}(0,1)$ independent of everything and define the randomized rank $R$ of $(S_{\mathrm{new}},V_{\mathrm{new}})$ among all $n_{\mathrm{cal}}+1$ pairs under lexicographic order. Continuous tie-breaking ensures all pairs are distinct a.s., so exchangeability gives $\Prob(R=r\mid\hat{\btheta})=1/(n_{\mathrm{cal}}+1)$ for $r=1,\ldots,n_{\mathrm{cal}}+1$.

If $S_{\mathrm{new}}>S_{(k)}$, then at least $k$ calibration scores satisfy $S_i\le S_{(k)}<S_{\mathrm{new}}$, forcing $R\ge k+1$. By contrapositive, $\{R\le k\}\subseteq\{S_{\mathrm{new}}\le S_{(k)}\}$. Therefore
$\Prob(S_{\mathrm{new}}\le\hat\rho_\tau\mid\hat{\btheta}) \ge \Prob(R\le k\mid\hat{\btheta}) = k/(n_{\mathrm{cal}}+1) \ge \tau$.
Taking expectations over $\hat{\btheta}$ yields $\Prob(U_{\mathrm{new}}\in\cU_{\hat{\btheta},\hat\rho_\tau})\ge\tau$.
\end{proof}

\begin{proof}[Proof of Theorem~\ref{thm:consistency-eps}]
Fix $\varepsilon>0$ and $\btheta\in\Theta$. Write $S_i:=s_{\btheta}(U_i)$, and let $\hat F$, $F$ denote the empirical and population smoothed CDFs; $\hat\rho$, $\rho$ their $\tau$-quantiles.

\textbf{Step 1 (Uniform CDF convergence).}
Both $F$ and $\hat{F}$ are Lipschitz in $r$ with constant $L_F:=\|\varphi\|_\infty/\varepsilon$. Fix $\delta>0$ so that $\kappa:=\inf_{r\in[\rho-\delta,\rho+\delta]}\partial_rF(r)>0$. Covering $[\rho-\delta,\rho+\delta]$ with a finite grid of mesh $h=\eta/(4L_F)$, Lipschitz interpolation gives $\sup_{r\in[\rho-\delta,\rho+\delta]}|\hat{F}(r)-F(r)|\le\max_k|\hat{F}(r_k)-F(r_k)|+\eta/2$. The LLN at each grid point and a union bound yield $\sup_{r\in[\rho-\delta,\rho+\delta]}|\hat{F}(r)-F(r)|\xrightarrow{\Prob}0$.

\textbf{Step 2 (Quantile consistency).}
By the mean value theorem, $F(\rho+\eta)\ge\tau+\kappa\eta$ and $F(\rho-\eta)\le\tau-\kappa\eta$. On the event $\sup_{r\in[\rho-\delta,\rho+\delta]}|\hat{F}-F|\le\kappa\eta/2$, we get $\hat{F}(\rho+\eta)\ge\tau+\kappa\eta/2>\tau$ and $\hat{F}(\rho-\eta)\le\tau-\kappa\eta/2<\tau$, so monotonicity of $\hat{F}$ forces $|\hat\rho-\rho|\le\eta$. Hence $\hat\rho\xrightarrow{\Prob}\rho$.

\textbf{Step 3 (Quantile sensitivity consistency).}
Define $\omega_r(u):=\varphi((r-s_{\btheta}(u))/\varepsilon)$ and the empirical numerator/denominator $N_n(r):=n^{-1}\sum_i\omega_r(U_i)\nabla_{\btheta} s_{\btheta}(U_i)$, $D_n(r):=n^{-1}\sum_i\omega_r(U_i)$, with population limits $N(r)$, $D(r)$.

At fixed $r=\rho$: integrable domination $\|\omega_{\rho}(U)\nabla_{\btheta} s_{\btheta}(U)\|\le\|\varphi\|_\infty M(U)$ and the LLN give $N_n(\rho)\xrightarrow{\Prob}N(\rho)$, $D_n(\rho)\xrightarrow{\Prob}D(\rho)$.

For random $r=\hat{\rho}$: $|\omega_{\hat{\rho}}(U_i)-\omega_{\rho}(U_i)|\le\Omega(|\hat{\rho}-\rho|/\varepsilon)$ where $\Omega$ is the modulus of continuity of $\varphi$. Since $\hat{\rho}\to\rho$ in probability, $\Omega\to 0$. Combined with the LLN averages, $N_n(\hat{\rho})\xrightarrow{\Prob}N(\rho)$ and $D_n(\hat{\rho})\xrightarrow{\Prob}D(\rho)$. Since $D(\rho)=\varepsilon\partial_rF(\rho)>0$ by (A3), the continuous mapping theorem gives $N_n(\hat{\rho})/D_n(\hat{\rho})\xrightarrow{\Prob}\nabla_{\btheta}\rho_{P,\varepsilon}(\btheta)$.

\textbf{Step 4 (Combine).}
Define envelope terms \\ $H(r):=\sum_j\mu_j^*(\btheta,r)\nabla_{\btheta}\sigma_j$ and $G(r):=\sum_j\mu_j^*(\btheta,r)\sigma_{\cU_{\btheta,1}}(\bw_j)$. By (A4)--(A5), $H$ and $G$ are continuous at $\rho$, so $H(\hat{\rho})\xrightarrow{\Prob}H(\rho)$ and $G(\hat{\rho})\xrightarrow{\Prob}G(\rho)$. By Slutsky's theorem,
\[
\hat{\bg}_\varepsilon(\btheta) = H(\hat{\rho})+G(\hat{\rho})\cdot\frac{N_n(\hat{\rho})}{D_n(\hat{\rho})} \xrightarrow{\Prob} H(\rho)+G(\rho)\cdot\nabla_{\btheta}\rho_{P,\varepsilon}(\btheta) = \nabla_{\btheta} J_{P,\varepsilon}(\btheta).
\]
\end{proof}

\begin{corollary}[Coverage Preserved Under Tuned Training]\label{cor:coverage-preserved}
Let $\hat{\btheta}$ depend only on $\cD_{\text{train}} \cup \cD_{\text{tune}}$. Then conformal calibration on $\cD_{\text{cal}}$ yields $\Prob(U_{\text{new}} \in \cU_{\hat{\btheta}, \hat{\rho}_\tau}) \geq \tau$.
\end{corollary}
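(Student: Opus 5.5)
The plan is to reduce the statement to Proposition~\ref{thm:conformal} by conditioning on the training-and-tuning data. Let $\mathcal{F} := \sigma(\cD_{\text{train}}\cup\cD_{\text{tune}})$. The pipeline has a fixed four-way split, and $\cD_{\text{cal}}$ together with the future realization $U_{\text{new}}$ is drawn from the held-out portion. Under the split design assumed in the paper, with i.i.d.\ or more generally exchangeable data across the split, this held-out block is independent of $\mathcal{F}$. Since $\hat{\btheta}$ is $\mathcal{F}$-measurable by hypothesis, it is independent of $(U_1,\ldots,U_{n_{\text{cal}}},U_{\text{new}})$. Moreover, conditional on $\mathcal{F}$ (and hence on $\hat{\btheta}$), these $n_{\text{cal}}+1$ variables keep their joint law, so they remain exchangeable. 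This verifies Assumption~\ref{asm:exchange} together with the independence hypothesis of Proposition~\ref{thm:conformal}.

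Next I would apply Proposition~\ref{thm:conformal} conditionally. Fix a realization of $\mathcal{F}$, so that $\hat{\btheta}$ is a deterministic shape. The scores $S_i = s_{\hat{\btheta}}(U_i)$ and $S_{\text{new}} = s_{\hat{\btheta}}(U_{\text{new}})$ are then fixed measurable functions of exchangeable variables, and so are exchangeable. The randomized-rank argument in the proof of Proposition~\ref{thm:conformal} gives
\[
\Prob\bigl(S_{\text{new}}\le \hat\rho_\tau \mid \mathcal{F}\bigr) \ge \lceil (n_{\text{cal}}+1)\tau\rceil/(n_{\text{cal}}+1) \ge \tau .
\]
By the definition of $\cU_{\hat{\btheta},\hat\rho_\tau}$ through the gauge, $\{S_{\text{new}}\le\hat\rho_\tau\}=\{U_{\text{new}}\in\cU_{\hat{\btheta},\hat\rho_\tau}\}$. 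Taking expectations over $\mathcal{F}$ via the tower property yields the unconditional bound $\Prob(U_{\text{new}} \in \cU_{\hat{\btheta}, \hat{\rho}_\tau}) \geq \tau$.

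The only substantive point, and the main potential obstacle, is justifying conditional exchangeability after the data-dependent training loop. Algorithm~\ref{alg:main} uses $\cD_{\text{tune}}$ repeatedly through $\hat\rho_\varepsilon$ and its sensitivity, and the iterates are complicated functions of the data. This is harmless precisely because none of these computations touches $\cD_{\text{cal}}$ or $U_{\text{new}}$. I would state this measurability explicitly and note that it holds regardless of the step sizes, projections, or number of iterations $K$. I would also remark that $\hat\rho_\tau$ is finite whenever $\lceil(n_{\text{cal}}+1)\tau\rceil\le n_{\text{cal}}$. In the edge case where this fails, the convention $\hat\rho_\tau=+\infty$ gives coverage trivially.
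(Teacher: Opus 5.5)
Your proposal is correct and follows the paper's intended route: the paper gives no separate proof and treats the corollary as an immediate application of Proposition~\ref{thm:conformal}, since $\hat{\btheta}$ is a function of data disjoint from $\cD_{\text{cal}}$ and $U_{\text{new}}$, which is exactly your conditioning-on-$\mathcal{F}$ plus tower-property argument. Two small precisions. First, mere exchangeability of the whole sequence does not make the held-out block independent of $\mathcal{F}$; it only makes it conditionally exchangeable given $\mathcal{F}$, which is all the rank argument needs. Second, your bound $\lceil (n_{\text{cal}}+1)\tau\rceil/(n_{\text{cal}}+1)$ and your $+\infty$ edge-case convention presuppose $\hat\rho_\tau = S_{(\lceil (n_{\text{cal}}+1)\tau\rceil)}$ as in Algorithm~\ref{alg:contextual}, whereas the displayed formula~\eqref{eq:conformal-rho}, because of the extra $1+$ in its numerator, evaluates to the next-lower order statistic, so you should state explicitly that you use the order-statistic definition.
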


\section{Ellipsoidal Computation Details}
\label{app:ellipsoidal}

\begin{proposition}[Support Function for Ellipsoids]\label{prop:ellipsoid-support}
$\sigma_{\cU_{\bL,\rho}}(\bw) = \rho \norm{\bL^\top \bw}_2$.
\end{proposition}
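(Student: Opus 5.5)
The plan is to compute the support function directly from the definition by a change of variables into whitened coordinates, and then to finish with Cauchy--Schwarz. For the ellipsoidal family the gauge is $s_{\bL}(\bu)=\|\bL^{-1}\bu\|_2$, so
\[
\cU_{\bL,\rho}=\{\bu:\|\bL^{-1}\bu\|_2\le\rho\}.
\]
Since $\bL$ is lower triangular with positive diagonal, it is invertible. The linear map $\bm{v}\mapsto \bL\bm{v}$ is therefore a bijection from the Euclidean ball $\rho B_2=\{\bm{v}:\|\bm{v}\|_2\le\rho\}$ onto $\cU_{\bL,\rho}$, and we have $\cU_{\bL,\rho}=\bL(\rho B_2)$.

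Substituting $\bu=\bL\bm{v}$ into the definition of the support function gives
\[
\sigma_{\cU_{\bL,\rho}}(\bw)=\sup_{\|\bm{v}\|_2\le\rho}\ip{\bw}{\bL\bm{v}}=\sup_{\|\bm{v}\|_2\le\rho}\ip{\bL^\top\bw}{\bm{v}}.
\]
Cauchy--Schwarz bounds the right-hand side above by $\rho\|\bL^\top\bw\|_2$. The bound is attained as follows:
\begin{itemize}
\item if $\bL^\top\bw\neq\bm{0}$, take $\bm{v}^*=\rho\,\bL^\top\bw/\|\bL^\top\bw\|_2$;
\item if $\bL^\top\bw=\bm{0}$, both sides are $0$.
\end{itemize}
This yields $\sigma_{\cU_{\bL,\rho}}(\bw)=\rho\|\bL^\top\bw\|_2$.

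Alternatively, one can compute the case $\rho=1$ first and then invoke Lemma~\ref{lem:support-scaling}. This route uses that $\cU_{\bL,\rho}=\rho\,\cU_{\bL,1}$, which holds by Assumption~\ref{asm:gauge} and is checked directly here by homogeneity of the norm.

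There is no substantive obstacle. The only points needing care are the invertibility of $\bL$, which justifies the bijective change of variables, and the degenerate case $\bL^\top\bw=\bm{0}$ in the attainment step. As a byproduct, the maximizer $\bu^*=\rho\,\bL\bL^\top\bw/\|\bL^\top\bw\|_2$ is the worst-case realization. It is unique when $\bw\neq\bm{0}$, which is consistent with the gradient formulas in~\eqref{eq:grad-sigma-L}.
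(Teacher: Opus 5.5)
Your proposal is correct and follows the paper's proof. Both substitute $\bu=\bL\bm{v}$ to rewrite the support function as $\sup_{\|\bm{v}\|_2\le\rho}\ip{\bL^\top\bw}{\bm{v}}$ and then evaluate it by Cauchy--Schwarz; the paper does this in one line, while you also make explicit the invertibility of $\bL$ and the attainment of the supremum.
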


\begin{proof}
$\sigma_{\cU_{\bL,\rho}}(\bw) = \sup_{\norm{\bm{v}}_2 \leq \rho} \ip{\bL^\top\bw}{\bm{v}} = \rho \norm{\bL^\top\bw}_2$.
\end{proof}

\begin{proposition}[Ellipsoidal Gradients]\label{prop:ellipsoid-grads}
For $\bw\neq 0$ with $\bL^\top\bw\neq 0$:
$\nabla_{\bL}\sigma_{\cU_{\bL,\rho}}(\bw) = \rho\bw\bw^\top\bL/\|\bL^\top\bw\|_2$.
For $\bu\neq 0$:
$\nabla_{\bL}s_{\bL}(\bu) = -\bL^{-\top}(\bL^{-1}\bu)(\bL^{-1}\bu)^\top/\|\bL^{-1}\bu\|_2$.
\end{proposition}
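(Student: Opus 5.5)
The plan is a direct matrix-calculus computation for each formula, using Proposition~\ref{prop:ellipsoid-support} so that $\sigma_{\cU_{\bL,\rho}}(\bw)=\rho\norm{\bL^\top\bw}_2$. Throughout, $\nabla_{\bL}$ denotes the Frobenius gradient over all $d\times d$ entries. For lower-triangular $\bL$, one then restricts to the lower-triangular entries, which is consistent with the projection $\Pi_\Theta$. The strategy is to write each function as $\norm{\cdot}_2$ composed with a linear or inverse map. Then one computes the first-order differential $dh=\ipF{G}{d\bL}$ and reads off $G$. The only nondifferentiability of the Euclidean norm is at the origin. The hypotheses $\bL^\top\bw\neq 0$ and $\bu\neq 0$ exclude it, since $\bL$ is invertible and so $\bL^{-1}\bu\neq 0$.

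\emph{Support function.} Set $\bm{v}=\bL^\top\bw$. Away from zero, $d\norm{\bm{v}}_2=\ip{\bm{v}}{d\bm{v}}/\norm{\bm{v}}_2$, and $d\bm{v}=(d\bL)^\top\bw$. This gives
\[
\ip{\bL^\top\bw}{(d\bL)^\top\bw}=\bw^\top d\bL\,\bL^\top\bw=\tr\bigl(\bL^\top\bw\bw^\top d\bL\bigr)=\ipF{\bw\bw^\top\bL}{d\bL}.
\]
Multiplying by $\rho/\norm{\bL^\top\bw}_2$ yields $\nabla_{\bL}\sigma=\rho\,\bw\bw^\top\bL/\norm{\bL^\top\bw}_2$.

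\emph{Gauge.} Set $\bm{z}=\bL^{-1}\bu$. Differentiating the identity $\bL\bL^{-1}=I$ gives $d(\bL^{-1})=-\bL^{-1}(d\bL)\bL^{-1}$, so $d\bm{z}=-\bL^{-1}(d\bL)\bm{z}$. Therefore
\[
ds_{\bL}(\bu)=\frac{\bm{z}^\top d\bm{z}}{\norm{\bm{z}}_2}=-\frac{\bm{z}^\top\bL^{-1}(d\bL)\bm{z}}{\norm{\bm{z}}_2}=-\frac{\tr\bigl(\bm{z}\bm{z}^\top\bL^{-1}d\bL\bigr)}{\norm{\bm{z}}_2}.
\]
Writing this as $\ipF{G}{d\bL}$ with $G=-(\bm{z}\bm{z}^\top\bL^{-1})^\top/\norm{\bm{z}}_2=-\bL^{-\top}\bm{z}\bm{z}^\top/\norm{\bm{z}}_2$ gives the stated formula.

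There is no real obstacle. The step needing the most care is the transposition bookkeeping when converting $\tr(A\,d\bL)$ into $\ipF{A^\top}{d\bL}$. One should also note that the two gradients are genuine classical (Fréchet) gradients, which yields the $C^1$ property required by Assumptions~\ref{asm:sigma-diff} and~\ref{asm:score-diff}. This holds because each function is a composition of maps that are smooth on the open set of invertible $\bL$ with nonzero argument to the norm.
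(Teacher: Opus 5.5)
Your proposal is correct and follows essentially the same route as the paper: both take the differential of $\norm{\cdot}_2$ composed with $\bL\mapsto\bL^\top\bw$ (respectively $\bL\mapsto\bL^{-1}\bu$, using $d(\bL^{-1})=-\bL^{-1}(d\bL)\bL^{-1}$) and read off the Frobenius gradient, and your transposition bookkeeping checks out. Your remarks on restricting to lower-triangular entries and on the resulting $C^1$ property are not in the paper's proof and are not needed for the statement, but they are correct.
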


\begin{proof}
For the support function, let $y:=\bL^\top\bw$. Then $\mathrm{d}\sigma = \rho y^\top\mathrm{d}y/\|y\|_2 = \rho(\bL^\top\bw)^\top(\mathrm{d}\bL)^\top\bw/\|\bL^\top\bw\|_2$, which identifies the Frobenius gradient as $\rho\bw(\bL^\top\bw)^\top/\|\bL^\top\bw\|_2 = \rho\bw\bw^\top\bL/\|\bL^\top\bw\|_2$.

For the gauge, let $v:=\bL^{-1}\bu$. Then $\mathrm{d}v=-\bL^{-1}(\mathrm{d}\bL)v$ gives $\mathrm{d}s = -v^\top\bL^{-1}(\mathrm{d}\bL)v/\|v\|_2$, identifying the gradient as $-\bL^{-\top}vv^\top/\|v\|_2$.
\end{proof}

\section{Decoupled Zonal Benchmark and Additional Diagnostics}
\label{app:zonal-reserve}

The decoupled formulation removes the transfer constraints from the main text and retains only the zonal reserve requirements. Because the reserve margins depend explicitly on $(\bL,\rho)$, this benchmark isolates the core economic effect of learning the uncertainty-set geometry and provides a useful implementation check.

\subsection{Decoupled Benchmark Results}

Cost and reserves are evaluated at the conformally calibrated $\hat{\rho}_\tau$; for static methods cost is deterministic, for contextual it is averaged over test-set contexts. Reserve reports $\sum_z R_z^{\min}$.

\begin{table}[!t]
\caption{Appendix Benchmark: Decoupled Zonal Reserves}
\label{tab:decoupled}
\centering\footnotesize\setlength{\tabcolsep}{4pt}
\begin{tabular}{lcccc}
\toprule
\textbf{Method} & \textbf{Cost (\$/hr)} & \textbf{Reserve (MW)} & \textbf{Calibration Rate$^*$} & \textbf{Test Coverage [CI]$^\dagger$} \\
\midrule
Sample Covariance & 100{,}012 & 1{,}572 & 0.950 & 0.980 [.967, .990] \\
Independent & 99{,}944 & 1{,}563 & 0.950 & 0.986 [.975, .994] \\
Learned (Static) & 95{,}471 & 956 & 0.950 & 0.967 [.946, .985] \\
Learned (Contextual)$^\ddagger$ & 96{,}886 & 1{,}167 & 0.950 & 0.983 [.966, .995] \\
\bottomrule
\multicolumn{5}{l}{\footnotesize $^*$Calibration inclusion rate (empirical fraction of calibration points inside the set).} \\
\multicolumn{5}{l}{\footnotesize $^\dagger$Block bootstrap 95\% CI (block length 24\,hr, 10{,}000 replicates).} \\
\multicolumn{5}{l}{\footnotesize $^\ddagger$Contextual results for a single training seed.}
\end{tabular}
\end{table}

Table~\ref{tab:decoupled} shows that Learned (Static) reduces cost by 4.5\% (\$4{,}541/hr) relative to the Sample Covariance baseline and by 4.5\% (\$4{,}473/hr) relative to the Independent ablation, while reducing reserve procurement by about 39\% relative to Sample Covariance (1{,}572 to 956~MW). The gain comes almost entirely from reserve procurement (reserve component $\sum_i c_i^r r_i$: \$9{,}115/hr $\to$ \$4{,}805/hr, a 47\% decrease), while the energy component $\sum_i c_i^g g_i$ changes by less than \$163/hr. The Sample Covariance baseline performs comparably to the Independent ablation (\$100{,}012 vs.\ \$99{,}944), confirming that a statistically reasonable covariance estimate is not by itself sufficient for lower dispatch cost.

By construction, the split-conformal radius in Proposition~\ref{thm:conformal} yields calibration coverage at least $\tau$. Table~\ref{tab:decoupled} reports out-of-sample test coverage with 95\% CIs from a 24\,hr block bootstrap (10{,}000 replicates) to account for serial dependence; all methods exceed $\tau = 0.95$.

\subsection{Target-Level Sweep in the Decoupled Benchmark}

\begin{table}[!t]
\caption{Appendix Diagnostic: Cost--Coverage Tradeoff in the Decoupled Benchmark}
\label{tab:tau-sweep}
\centering
\begin{tabular}{lccccc}
\toprule
& \multicolumn{5}{c}{$\tau$} \\
\cmidrule(lr){2-6}
\textbf{Method} & 0.90 & 0.92 & 0.95 & 0.97 & 0.99 \\
\midrule
\multicolumn{6}{l}{\textit{Cost (\$/hr)}} \\
Sample Covariance & 99{,}186 & 99{,}491 & 100{,}012 & 100{,}489 & 101{,}501 \\
Independent & 99{,}119 & 99{,}481 & 99{,}944 & 100{,}530 & 101{,}523 \\
Learned (Static) & 94{,}951 & 95{,}134 & 95{,}471 & 95{,}981 & 96{,}809 \\
Learned (Contextual)$^\ddagger$ & 95{,}719 & 96{,}036 & 96{,}886 & 97{,}711 & 99{,}857 \\
\midrule
\multicolumn{6}{l}{\textit{Test Coverage}} \\
Sample Covariance & 0.949 & 0.964 & 0.980 & 0.987 & 0.995 \\
Independent & 0.964 & 0.975 & 0.986 & 0.993 & 0.998 \\
Learned (Static) & 0.931 & 0.947 & 0.967 & 0.982 & 0.989 \\
Learned (Contextual)$^\ddagger$ & 0.966 & 0.974 & 0.983 & 0.988 & 0.996 \\
\bottomrule
\multicolumn{6}{l}{\footnotesize $^\ddagger$Contextual results for a single training seed.}
\end{tabular}
\end{table}

Table~\ref{tab:tau-sweep} reports cost and test coverage across target levels $\tau \in \{0.90, 0.92, 0.95, 0.97, 0.99\}$, with each method's shape $\bL$ fixed at the $\tau = 0.95$ training point and only the conformal radius $\rho$ recalibrated at each $\tau$. Learned (Static) achieves lower cost than the Sample Covariance baseline at every $\tau$ while maintaining realized test coverage near or above the target, confirming that the economic advantage is not an artifact of evaluating at a looser coverage level. Independent again serves as a diagonal ablation. At comparable realized coverage (e.g., Independent at $\tau = 0.95$ gives 0.986 versus Learned (Static) at $\tau = 0.97$ gives 0.982), Learned (Static) remains \$3{,}963/hr cheaper.

\end{document}